\documentclass[11pt]{amsart}
\usepackage{mathrsfs} 
\usepackage[margin=1.25in]{geometry}
\usepackage{ifthen}
\usepackage{graphicx}
\usepackage{epsfig}
\usepackage{dsfont, amssymb}
\usepackage{amsfonts,dsfont,amssymb,amsthm,stmaryrd,bbm}
\usepackage{amsmath}
\DeclareMathOperator*{\argmax}{arg\,max}
\DeclareMathOperator*{\argmin}{arg\,min}

\usepackage{xcolor}

\usepackage[toc,page]{appendix} 
\usepackage{hyperref,mathtools,cleveref}
\hypersetup{colorlinks=true,pdftitle=""
}

\DeclarePairedDelimiter{\norm}{\lVert}{\rVert}

\setlength{\topmargin}{-0.45 in}     
\setlength{\oddsidemargin}{0.3in}  
\setlength{\evensidemargin}{0.3in} 
\setlength{\textheight}{9in}
\setlength{\textwidth}{6.1in} 
\setlength{\footskip}{0.55in}  

\let\originallesssim\lesssim
\let\originalgtrsim\gtrsim

\DeclareRobustCommand{\lesssim}{%
  \mathrel{\mathpalette\lowersim\originallesssim}%
}
\DeclareRobustCommand{\gtrsim}{%
  \mathrel{\mathpalette\lowersim\originalgtrsim}%
}

\makeatletter
\newcommand{\lowersim}[2]{%
  \sbox\z@{$#1<$}%
  \raisebox{-\dimexpr\height-\ht\z@}{$\m@th#1#2$}%
}
\makeatother

\newtheorem{thm}{Theorem}[section]
\newtheorem{theorem}[thm]{Theorem}

\newtheorem{lem}[thm]{Lemma}
\newtheorem{lemma}[thm]{Lemma}
\newtheorem{prop}[thm]{Proposition}

\newtheorem{definition}[thm]{Definition}
\newtheorem{cor}[thm]{Corollary}

\newcommand\independent{\protect\mathpalette{\protect\independent}{\perp}} 
\def\independent#1#2{\mathrel{\rlap{$#1#2$}\mkern2mu{#1#2}}}

\DeclareMathOperator{\Var}{Var}








\DeclarePairedDelimiter{\abs}{\lvert}{\rvert}

\def\Var{{\rm Var}}

\def\phi{\varphi}

\def\bee{\begin{eqnarray*}}
\def\ene{\end{eqnarray*}}
\usepackage{mathtools}



\hyphenation{op-tical net-works semi-conduc-tor}

\begin{document}

\title{Anti-concentration inequalities for log-concave variables on the real line}
\author{Tulio Gaxiola, James Melbourne, Vincent Pigno, and Emma Pollard}
\date{\today}

\begin{abstract}
    We prove sharp anti-concentration results for log-concave random variables on the real line in both the discrete and continuous setting.  Our approach is elementary and uses majorization techniques to recover and extend some recent and not so recent results.
\end{abstract}
\maketitle

\section{Introduction}
In this article we will derive some sharp inequalities between some statistical functionals on the class of log-concave probability distributions on the line.  In all cases the inequalities provide upper bounds on the amount of mass a density can assign  to a small interval of $\mathbb{R}$, and in this sense we consider these results anti-concentration inequalities.  Our first main result is the following bound on the variance of a log-concave random variable on $\mathbb{R}$.

    \begin{thm} \label{thm: variance function and distance mean to mode}
        Given $X$ a random variable with mean $\mu$ and log-concave density $f$ and $t \in \mathbb{R}$
        \[
            2\Var(X) \leq \frac{1}{f^2(t)} + (\mu - t)^2 .
        \]
    \end{thm}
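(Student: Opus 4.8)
The plan is to localise to each side of $t$ and control the first two moments there. I would first translate so that $t=0$; then $c:=f(0)$ and, using $2\Var(X)=2\E X^2-2\mu^2$, the claim is equivalent to $2\E X^2-3\mu^2\le c^{-2}$. If $c=0$ the right-hand side of the theorem is $+\infty$, so assume $c>0$ and recall that a log-concave density has all moments finite. Split the mass at the origin: let $a=\P(X<0)$, $b=\P(X>0)=1-a$, and put $M_+=\int_0^\infty xf$, $M_-=\int_{-\infty}^0(-x)f$, $S_+=\int_0^\infty x^2f$, $S_-=\int_{-\infty}^0 x^2f$, so that $\mu=M_+-M_-$ and $\E X^2=S_++S_-$. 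The conditional law of $X$ given $X>0$, and that of $-X$ given $X<0$, have log-concave densities on $[0,\infty)$ whose value at the endpoint $0$ is $c/b$, respectively $c/a$ (if $a=0$ or $b=0$ only one side survives and the argument simplifies).

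I would then use two elementary facts about a log-concave density $h$ on $[0,\infty)$ with endpoint value $d=h(0^+)>0$ and mean $m$. (i) \emph{Tangent-line bound.} The tail $\overline H(x)=\int_x^\infty h$ is log-concave with $\overline H(0)=1$ and $(\log\overline H)'(0)=-d$; concavity of $\log\overline H$ gives $\log\overline H(x)\le-dx$, hence $\overline H(x)\le e^{-dx}$ and $m=\int_0^\infty\overline H\le 1/d$. (ii) \emph{Reverse second moment (Borell type).} $\int_0^\infty x^2h\le 2m^2$; equivalently a nonnegative log-concave variable has coefficient of variation at most that of an exponential. This is the $p=2$ instance of the classical reverse H\"older inequality for log-concave variables, and I would prove it by majorization: $\overline H$ and the tail $e^{-x/m}$ of the mean-matched exponential have the same integral over $[0,\infty)$, and by (i) together with concavity of $\log\overline H$ the difference $\overline H(x)-e^{-x/m}$ changes sign at most once and from $+$ to $-$, so $\int_0^\infty x\bigl(\overline H(x)-e^{-x/m}\bigr)\,\de x\le 0$, i.e.\ $\int_0^\infty x^2h=2\int_0^\infty x\overline H\le 2\int_0^\infty x e^{-x/m}\,\de x=2m^2$. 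Applying (i) to the two conditional densities gives $M_+\le b^2/c$ and $M_-\le a^2/c$, and applying (ii) gives $S_+\le 2M_+^2/b$ and $S_-\le 2M_-^2/a$.

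To conclude, substitute these into $2\E X^2-3\mu^2$ (with $\E X^2=S_++S_-$ and $\mu=M_+-M_-$):
\begin{align*}
2\E X^2-3\mu^2 &\le \tfrac{4}{b}M_+^2+\tfrac{4}{a}M_-^2-3(M_+-M_-)^2\\
&=\bigl(\tfrac{4}{b}-3\bigr)M_+^2+\bigl(\tfrac{4}{a}-3\bigr)M_-^2+6M_+M_-=:\Phi(M_+,M_-).
\end{align*}
Since $a,b\in(0,1)$, the coefficients $\tfrac{4}{b}-3$ and $\tfrac{4}{a}-3$ are positive, so $\Phi$ is nondecreasing in each argument on $[0,\infty)^2$; hence, using $M_+\le b^2/c$ and $M_-\le a^2/c$,
\[
2\E X^2-3\mu^2\le\Phi\!\left(\tfrac{b^2}{c},\tfrac{a^2}{c}\right)=\tfrac{1}{c^2}\bigl(4(a^3+b^3)-3(a^4+b^4)+6a^2b^2\bigr).
\]
The proof then closes with the identity $4(a^3+b^3)-3(a^4+b^4)+6a^2b^2=1$ whenever $a+b=1$ — substituting $b=1-a$ and expanding, every term of positive degree in $a$ cancels — which gives $2\E X^2-3\mu^2\le c^{-2}$, as required.

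The only step requiring genuine care is ingredient (ii): establishing the reverse moment inequality, or rather verifying the single-crossing behaviour that drives it (that $\overline H-e^{-x/m}$ changes sign at most once and from $+$ to $-$, which is where log-concavity of the tail and the normalisation $\int\overline H=m$ are used). The tangent-line bound (i), the monotonicity of $\Phi$, and the polynomial identity are all routine, as are the degenerate cases $a=0$ or $b=0$ — these are exactly the one-sided exponential extremizers. Finally, equality propagates through every inequality precisely when both conditional densities are exponential, i.e.\ when $f(x)=Ae^{\alpha x}$ on $(-\infty,0)$ and $f(x)=Ae^{-\beta x}$ on $(0,\infty)$ (together with the one-sided limits), so the inequality is sharp.
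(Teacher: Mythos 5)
Your proof is correct and takes a genuinely different route from the paper. The paper proves the stronger Proposition~\ref{prop: match a point and convex domination}: given $X$ and $t$, it constructs, by an intermediate value argument over the one-parameter family of asymmetric Laplace laws with density $f(t)$ at $t$, an $X_\lambda$ with the same mean and $X \prec_{cx} X_\lambda$; the density-crossing Lemmas~\ref{lem:one_crossing} and~\ref{lem:two_crossing} drive the convex domination, and the theorem drops out by evaluating $\Var(X_\lambda) = \lambda_1^2 + \lambda_2^2$. You instead condition on $\{X > t\}$ and $\{X < t\}$, apply to each conditional law a tangent-line bound on the log-concave survival function (giving $m \le 1/d$) and a Borell-type reverse second-moment bound ($\int x^2 h \le 2m^2$, which you also establish by a one-crossing argument but only for the tail against a mean-matched exponential), then reassemble using monotonicity of the resulting quadratic in $(M_+,M_-)$ together with the identity $4(a^3+b^3) - 3(a^4+b^4) + 6a^2b^2 = 1$ for $a+b=1$. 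The trade-offs: your argument is more elementary in that it never needs the full convex-order machinery nor the two-crossing lemma, and it isolates two clean facts about one-sided log-concave densities; but it proves only the scalar variance inequality, whereas the paper's proposition delivers the full convex-order domination $X \prec_{cx} X_\lambda$, which is what powers Theorem~\ref{thm: orlicz norm} and Corollary~\ref{cor: maximums to ACM}. Your closing identity is a pleasant surprise but somewhat opaque — it amounts to the same algebraic fact, hidden inside the condition $\lambda_1 + \lambda_2 = 1/f(t)$, $\lambda_2 - \lambda_1 = \mu - t$, that the paper exposes directly. Minor technical points you should be aware of but which do not damage the argument: when $0$ (that is, $t$) lies in the interior of the support, continuity of $f$ there makes the endpoint values $c/b$ and $c/a$ of the conditional densities legitimate; and $\P(X=0)=0$ since $f$ is a Lebesgue density, so $a+b=1$ holds as you use it.
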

    Taking $t = m$ a mode of $X$,  we have
    \[
        2 \Var(X) \leq \frac{1}{\| f\|_\infty^2} + (\mu - m )^2.
    \]
    Note that $\| f\|_\infty^{-2}$  corresponds to the $\infty$-R\'enyi entropy power, sometimes written $N_\infty(X)$; see \cite{MMX17:0} for background.
    When $X$ is symmetric, necessarily $\mu = m$, and this recovers a result of Hensley \cite{hensley1980slicing}, and should be compared to Bobkov \cite{bobkov1999isoperimetric}, who proved that $2 \Var(X) \leq N_\infty(X)$ holds when the median of $X$ matches $\mu$.
    Taking $t = \mu$, we have
    \[
        \Var(X) f^2(\mu) \leq \frac 1 2.
    \]
    These results follow from a reduction through a stochastic ordering argument (see \cite{marshall2011inequalities, shaked2007stochastic}) to the asymmetric Laplace distributions, which attain equality in Theorem~\ref{thm: variance function and distance mean to mode}.  The use of majorization in congruence with log-concavity dates back at least to \cite{whitt1985uniform}, see also \cite{yu2008maximum, yu2009entropy, marsiglietti2022moments, melbourne2020reversal} for more recent developments.
    From here we analyze in detail the asymmetric Laplace distributions. Here our work is essentially a reformulation of Fradelizi \cite{fradelizi1999hyperplane}, who considered the problem in the context of hyperplane slices of convex bodies.  To state our result we will use the following definitions and notations. 
 For $W$ a random variable with density $f$ with respect to a reference measure, which in this paper will be either the Lebesgue measure on $\mathbb{R}$ or the counting measure on $\mathbb{Z}$, $M(W)$ denotes the essential supremum of $f$ with respect to said reference measure.  For $\psi$, a Young function, that is $\psi$ strictly increasing, convex, with $\psi(0) = 0$, we define the $\psi$-Orlicz norm 
    \[
        \| W \|_\psi \coloneqq \inf \left\{ t > 0 : \mathbb{E} \psi \left( \frac{|W|}{t} \right) \leq 1 \right \}. 
    \]
    \begin{thm} \label{thm: orlicz norm}
    For $X$ log-concave, $U$ uniform on an interval, and $Z$ exponential with parameters determined implicitly by the equality $M(U) = M(X) = M(Z)$, 
    \[
        \| U - \mathbb{E}U \|_\psi \leq \| X - \mathbb{E}{X} \|_{\psi} \leq \| Z - \mathbb{E}Z \|_\psi.
    \]
\end{thm}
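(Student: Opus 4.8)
The plan is to reduce the Orlicz-norm comparison to a statement about majorization of the distributions of $|X-\E X|$, $|U - \E U|$, and $|Z - \E Z|$ in the convex (stochastic) order, and then invoke the fact that Orlicz norms are monotone with respect to this order. Recall that for nonnegative random variables $V, W$ one says $V \leq_{cx} W$ (or, since we are comparing through a convex-ordering-type relation adapted to tails, the appropriate relation is the one sometimes called the \emph{increasing convex order} $\leq_{icx}$) when $\E\phi(V)\leq\E\phi(W)$ for all increasing convex $\phi$. For a Young function $\psi$, the map $t\mapsto\psi(|x|/t)$ is, for each fixed $t>0$, an increasing convex function of $|x|$; hence $V\leq_{icx} W$ immediately implies $\E\psi(|V|/t)\leq\E\psi(|W|/t)$ for every $t$, and taking the infimum over those $t$ for which the right side is $\leq 1$ gives $\|V\|_\psi\leq\|W\|_\psi$. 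So the entire theorem follows once we establish
\[
    |U - \E U| \leq_{icx} |X - \E X| \leq_{icx} |Z - \E Z|,
\]
where $U,Z$ are the uniform and exponential variables matched to $X$ through $M(U)=M(X)=M(Z)$.

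The core of the argument is therefore this two-sided majorization, which is exactly the kind of statement proved via the stochastic-ordering-to-extremal-distributions reduction already used for Theorem~\ref{thm: variance function and distance mean to mode}, and which is, as the introduction notes, essentially a reformulation of Fradelizi's hyperplane-slice estimates. Concretely, I would normalize so that $M(X)=1$ and let $f$ be the log-concave density of $X$; after translating, center at the mean so $\E X = 0$. The function $t\mapsto \P(|X|>t)$ is determined by the two tail functions $t\mapsto\P(X>t)$ and $t\mapsto\P(X<-t)$, each of which is log-concave on its support because $f$ is log-concave (a log-concave density has log-concave one-sided tails). Matching the maximum of the density pins down the relevant "slope at the peak." The extremal cases are: a uniform density on an interval of length $1$ (the flattest log-concave density with max $1$, giving the smallest spread), and a two-sided exponential profile with the maximal allowed decay rate consistent with $M=1$, which after centering is the asymmetric Laplace law (the most spread-out, giving the largest tails). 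One then checks that among all centered log-concave densities with $\|f\|_\infty = 1$, the distribution of the absolute deviation is squeezed in the increasing convex order between these two, by a standard single-crossing / cut argument: $\P(|U|>t)$ and $\P(|X|>t)$ cross exactly once (uniform has no mass past its endpoint, $X$ does), and similarly $\P(|X|>t)$ crosses $\P(|Z|>t)$ exactly once, and in each pair the "heavier-tailed one also has the larger mean deviation"; the single-crossing of survival functions together with the equality (or correct inequality) of first moments upgrades to the increasing convex order by the classical Karlin–Novikov cut criterion.

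The main obstacle, and the place where real care is needed, is verifying the single-crossing property for the \emph{absolute} deviation $|X-\E X|$ rather than for $X$ itself, because folding a density at its mean can destroy log-concavity of the folded density and can introduce a second crossing if the two one-sided tails of $X$ are very asymmetric. The remedy is to not fold naively: handle the two tails $(\mu,\infty)$ and $(-\infty,\mu)$ separately, compare each against the corresponding one-sided piece of the extremal uniform/Laplace variable (where one-sided log-concavity is available and the single-crossing of one-sided survival functions is classical — this is precisely Fradelizi's setup), and then recombine. The recombination step must respect the mean-zero constraint, so one needs the matching of $M(U)=M(X)=M(Z)$ to interact correctly with the centering; I expect this bookkeeping — showing the matched uniform and matched exponential are simultaneously centered in a way compatible with the single-crossing on each side — to be the delicate part, and it is what forces the specific normalization through the density maximum rather than, say, through the variance.
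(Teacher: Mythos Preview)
Your high-level strategy is correct: the theorem does reduce to establishing $|U-\E U| \prec_{icx} |X-\E X| \prec_{icx} |Z-\E Z|$, and Orlicz norms inherit this ordering. For the lower bound the paper does essentially what you describe, though it actually proves the stronger statement $U-\E U \prec_{cx} X-\E X$ by a direct two-crossing-of-\emph{densities} argument ($f_U=M\geq f_X$ on the support of $f_U$, and $f_U=0\leq f_X$ off it), never folding at all.

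The upper bound is where your proposal has a real gap. You correctly flag the obstacle --- folding a log-concave density at its mean need not preserve log-concavity, so a single-crossing argument on the survival functions of $|X-\E X|$ and $|Z-\E Z|$ is not directly available --- but your remedy (``handle the two tails separately \ldots\ and then recombine'') is left as unspecified bookkeeping; in particular, since $Z$ is a one-sided exponential there is no obvious ``corresponding one-sided piece'' on the empty side, and the mean constraint cannot be split. The paper avoids folding entirely via a two-step route. First, Proposition~\ref{prop: match a point and convex domination} produces an asymmetric Laplace $X_\lambda$ with $M(X_\lambda)=M(X)$ and $\E X_\lambda=\E X$ satisfying $X\prec_{cx}X_\lambda$; this is a density-crossing argument at the \emph{mode}, not the mean. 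Second --- and this is the idea you are missing --- all asymmetric Laplace densities with a given maximum $M$ are \emph{equimeasurable}: their superlevel sets $\{g_\lambda>t\}$ are intervals of length $M^{-1}\log(M/t)$ regardless of $\lambda$ (Lemma~\ref{lem:sublevel_set_formula}). A layer-cake decomposition then expresses $\E\psi(|X_\lambda-\E X_\lambda|)$ through quantities $|[-a,a]^c\cap\{g_\lambda>t\}|$, which are intersections of a fixed interval's complement with translates of a \emph{fixed-length} interval; the elementary Lemma~\ref{lem: translating intervals} shows these are maximized when $X_\lambda$ degenerates to the one-sided exponential $Z$. This equimeasurability-plus-translation step is what replaces the unavailable crossing argument for folded tails.
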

See \cite{leonard2007orlicz, rao1991theory} for background on Orlicz spaces. An important example is given by the case $\psi_p(x) = x^p$ for $p \geq 1$, in which case $\|W\|_{\psi_p} = \left(\mathbb{E}|W|^p \right)^{\frac 1 p}$. In particular from Theorem \ref{thm: orlicz norm} we obtain, as  Corollary \ref{cor: maximums to ACM},  sharp upper and lower bounds on $M(X)$ in terms of the $p$-absolute moment, for $p \geq 1$, extending Bobkov and Chistyakov's result in \cite{BC15:2} for the variance. 

In the discrete setting we obtain an analogous result through an analogous reduction to a class of extremal distributions with fixed maximum value, analogous to the asymmetric Laplace in the continuous setting exists, which we will refer to as discrete asymmetric Laplace distributions when there is risk of ambiguity with the continuous setting.  We obtain a discrete analog of Theorem \ref{thm: variance function and distance mean to mode}.  
\begin{thm} \label{thm: discrete variance point inequality}
    For $Y$ a log-concave random variable on $\mathbb{Z}$, and $n \in \mathbb{Z}$,
    \[
       2\  \Var[Y] \leq \left( \frac{1}{\mathbb{P}^2(Y=n)} - 1 \right) + (\mu - n)^2 .
    \]
    with equality if and only if $Y$ is a discrete asymmetric Laplace with mode $n$.
\end{thm}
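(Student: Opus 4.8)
The plan is to follow the same two-step strategy used for Theorem~\ref{thm: variance function and distance mean to mode}: first reduce an arbitrary log-concave $Y$ on $\mathbb{Z}$ to an extremal \emph{discrete asymmetric Laplace} law by a convex-order (majorization) comparison, and then verify by a direct geometric-series computation that this family makes the inequality an equality. By translation invariance of $\Var$ and of $\mathbb{P}(Y=n)$ we may take $n=0$; put $p\coloneqq\mathbb{P}(Y=0)$ and $\mu\coloneqq\mathbb{E}Y$ (finite, since log-concave sequences have geometric tails). Since $\Var(Y)=\mathbb{E}Y^{2}-\mu^{2}$, the claim is equivalent to $2\,\mathbb{E}Y^{2}-3\mu^{2}\le \frac{1}{p^{2}}-1$. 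The extremal objects are the discrete asymmetric Laplace laws with mode $0$, namely $q_{k}=p\,\alpha^{k}$ for $k\ge 0$ and $q_{k}=p\,\beta^{-k}$ for $k\le 0$, with $\alpha,\beta\in[0,1-p)$ constrained by normalization $p\big(\tfrac{1}{1-\alpha}+\tfrac{1}{1-\beta}-1\big)=1$; this is a one-parameter family of log-concave laws, each with unique mode $0$. Writing $A=\tfrac{1}{1-\alpha}$ and $B=\tfrac{1}{1-\beta}$, summing the relevant geometric series gives the clean identities $\tfrac{1}{p}=A+B-1$, $\mathbb{E}q=A-B$, and $\mathbb{E}q^{2}=p\big(2(A^{3}+B^{3})-3(A^{2}+B^{2})+(A+B)\big)$, whence $2\,\mathbb{E}q^{2}-3(\mathbb{E}q)^{2}=(A+B-1)^{2}-1=\tfrac{1}{p^{2}}-1$; so the inequality holds with equality along this whole family.

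The core of the proof --- and where I expect essentially all of the difficulty to lie --- is a discrete majorization lemma: for every log-concave $Y$ on $\mathbb{Z}$ with $\mathbb{P}(Y=0)=p$ one has $|\mu|\le \tfrac{1}{p}-1$, and there is a discrete asymmetric Laplace $\widetilde Y$ with mode $0$, $\mathbb{P}(\widetilde Y=0)=p$, and $\mathbb{E}\widetilde Y=\mu$, such that $Y\le_{\mathrm{cx}}\widetilde Y$ in the convex stochastic order (see \cite{shaked2007stochastic, marshall2011inequalities}). To construct $\widetilde Y$, condition $Y$ on $\{Y\ge 0\}$ and on $\{Y\le 0\}$: each conditional law is log-concave on a discrete half-line with prescribed mass at the endpoint $0$, and among all such laws the one-sided geometric has the heaviest tail --- a discrete counterpart of the extremality of the exponential exploited by Fradelizi \cite{fradelizi1999hyperplane}, in the circle of ideas of \cite{whitt1985uniform, yu2009entropy}. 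Splicing the two geometric envelopes at $0$ and using the single remaining parameter to match $\mathbb{E}\widetilde Y=\mu$ --- possible exactly because $|\mu|\le\tfrac1p-1$, which is itself the comparison tested against $x\mapsto\pm x$ --- produces $\widetilde Y$, and $Y\le_{\mathrm{cx}}\widetilde Y$ then follows from a cut (sign-change) criterion for the convex order, the difference of the two mass functions changing sign only twice. Making the splicing rigorous, and pinning down the half-line extremality together with its equality cases, is the delicate part.

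Granting the lemma, $Y\le_{\mathrm{cx}}\widetilde Y$ forces $\mathbb{E}Y=\mathbb{E}\widetilde Y=\mu$ and $\mathbb{E}Y^{2}\le\mathbb{E}\widetilde Y^{2}$, so $2\,\mathbb{E}Y^{2}-3\mu^{2}\le 2\,\mathbb{E}\widetilde Y^{2}-3\mu^{2}=\tfrac{1}{p^{2}}-1$ by the identity above, which is the asserted inequality. For the equality characterization: a discrete asymmetric Laplace with mode $0$ attains equality by that same identity; conversely, equality forces $\mathbb{E}Y^{2}=\mathbb{E}\widetilde Y^{2}$, and since on each half-line a log-concave law with prescribed endpoint mass has strictly smaller second moment unless it is exactly geometric, the cut comparison is rigid and $Y=\widetilde Y$ in law --- i.e.\ $Y$ is a discrete asymmetric Laplace with mode $0=n$, as claimed.
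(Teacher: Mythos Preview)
Your approach is correct and essentially mirrors the paper's: both reduce to a discrete asymmetric Laplace via convex ordering (your ``discrete majorization lemma'' is precisely the paper's Lemma~\ref{lem: asymmetric Laplace majorizer in discrete}, proved by the same one-parameter family / intermediate value theorem / two-crossing mechanism), and then evaluate the extremal variance explicitly --- your $A,B$ computation yielding $2\mathbb{E}\widetilde Y^{2}-3\mu^{2}=(A+B-1)^{2}-1=\tfrac{1}{p^{2}}-1$ is just a reparameterization of the paper's $\Var[X_{p,q}]=\tfrac{p}{(1-p)^{2}}+\tfrac{q}{(1-q)^{2}}$ identity. The half-line conditioning you sketch as motivation is not how the paper (or, in the end, you) actually constructs $\widetilde Y$: the paper gets the at-most-two-crossings property directly from the convexity of $k\mapsto\log g_{p,q}(k)-\log g(k)$ on each side of $0$ (the discrete analogue of \eqref{eq:at_most_two_crossings}), which is cleaner than the splicing picture and avoids the mismatch between conditional endpoint masses and the unconditional value $p$; since you ultimately appeal to the same sign-change criterion (Lemma~\ref{lem:two_crossing}) for both $Y\prec_{cx}\widetilde Y$ and the equality case, the arguments coincide.
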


Taking $n = m \coloneqq m_Y$ defined to be a mode of $Y$, we obtain
\[
   2 \ \Var[Y] \leq  \left( \Delta_\infty(Y) + (\mu - m)^2 \right)
\]
where $\Delta_\infty(Y) \coloneqq \frac{1}{M^2(Y)} - 1$ as in \cite{madiman2023bernoulli}.
Note that if the mean of $Y$ is a mode, this reduces to the inequality $\Var[Y] \leq \frac 1 2 \Delta_\infty(Y)$, which implies the inequality of Bobkov-Marsiglietti-Melbourne \cite{bobkov2022concentration}, where the same conclusion is obtained under the assumption that $Y$ is symmetric (which implies that $\mu = m = 0$).  
Note, the classical result of Darroch \cite{darroch1964distribution} states that when $Y$ has the distribution of an independent Bernoulli sum, at least one of $\lfloor \mu \rfloor$ and $\lceil \mu \rceil$ is a mode, which yields $2 \Var[Y] \leq \Delta_\infty(Y) + 1$.  
We note that the constant $1$ can be removed under these additional assumptions (see \cite{madiman2023bernoulli}). {\color{black} 
 If we take $n = [ \mu ] \coloneqq \argmin \{ m \in \mathbb{Z} : | \mu - m | \}$ we obtain from $| \mu - [\mu]|^2 \leq \frac 1 4$ that
\begin{align*}
     \mathbb{P}^2(Y = [\mu]) \leq \frac{1}{\frac 3 4 + 2 \Var[Y]}.
\end{align*}
In the case that $\mu \in \mathbb{Z}$ this can be improved to 
$
     \mathbb{P}^2(Y = \mu) \leq (1 + 2 \Var[Y])^{-1}.
$
}

However, we are unable to reach an analog of Theorem \ref{thm: orlicz norm} in the discrete setting.  The failing stems from the fact that an analog of \Cref{lem:sublevel_set_formula} below does not hold.  That is, in the continuous setting, all asymmetric Laplace distributions with a fixed maximum are equimeasurable, while in the discrete setting this is obviously false.  We do however, obtain the following.

\begin{thm} \label{thm: discrete piotr theorem}
    For $X$ a log-concave random variable on $\mathbb{Z}$ with $M(X) \coloneqq \max_{n \in \mathbb{Z}} \mathbb{P}(X = n)$,
    \begin{align} \label{eq: Maximum variance discrete LC}
        M^2(X) \Var(X) + M(X) \leq 1,
    \end{align}
and
        \begin{align} \label{eq: Maximum 4 ACM discrete LC}
       {\color{black} M^4(X) \ \mathbb{E} |X - \mathbb{E}[X]|^4 + M(X) \big( M^2(X) - 10 M(X) + 18 \big) \leq 9.}
    \end{align}
    The equality cases of \eqref{eq: Maximum variance discrete LC} and \eqref{eq: Maximum 4 ACM discrete LC} are identical, occuring if and only if $X$ is, up to translation and reflection, a geometric random variable.
\end{thm}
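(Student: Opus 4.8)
The plan is to reduce, via the majorization machinery behind Theorem~\ref{thm: discrete variance point inequality}, to the one-parameter family of discrete asymmetric Laplace laws of fixed maximum, and then to carry out an explicit optimization. First observe that the three quantities in the statement — $\Var(X)$, $\mathbb{E}|X - \mathbb{E}X|^{4}$, and $M(X)$ — are all invariant under translating $X$ by an integer and under the reflection $X \mapsto -X$, and that $x \mapsto x^{2}$ and $x \mapsto x^{4}$ are convex. Hence, for fixed $M(X) = M$, the same reduction that proves Theorem~\ref{thm: discrete variance point inequality} shows that the suprema of $\Var(X)$ and of $\mathbb{E}|X-\mathbb{E}X|^{4}$ over log-concave laws on $\mathbb{Z}$ are attained at a discrete asymmetric Laplace with atom $M$ at its mode; translating the mode to $0$, we may take $\mathbb{P}(X = k) = M a^{k}$ for $k \ge 0$ and $\mathbb{P}(X = -k) = M b^{k}$ for $k \ge 1$, with $a, b \in [0,1)$.

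Next I would introduce $u = \tfrac{a}{1-a} \ge 0$ and $v = \tfrac{b}{1-b} \ge 0$. The normalization $\tfrac{M}{1-a} + \tfrac{Mb}{1-b} = 1$ becomes the single linear constraint $u + v = \tfrac{1}{M} - 1 =: s$, and summing the series $\sum_{k \ge 1} k^{j} x^{k}$ for $j \le 4$ gives $\mathbb{E}X = u - v$ together with polynomial formulas for $\mathbb{E}X^{2}, \mathbb{E}X^{3}, \mathbb{E}X^{4}$ in $u,v$. Writing everything in terms of $s$ (fixed by $M$) and $w := uv \in [0, s^{2}/4]$, and repeatedly using $M(1+s) = 1$, one finds
\[
\Var(X) = \frac{1-M}{M^{2}} - 2w, \qquad
\mathbb{E}|X-\mathbb{E}X|^{4} = 24\,w^{2} + c(M)\,w + \frac{9 - M(M^{2} - 10M + 18)}{M^{4}},
\]
the $w = 0$ values being, as they must be, the variance and fourth central moment of the geometric law $\mathbb{P}(X = k) = M(1-M)^{k}$, $k \ge 0$.

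Given these formulas, the variance inequality is immediate: $\Var(X)$ is affine and strictly decreasing in $w$, so it is maximized on $[0, s^{2}/4]$ exactly at $w = uv = 0$, i.e.\ when $X$ is a geometric law or its reflection, which gives $M^{2}\Var(X) + M \le 1$ with equality only there. For the fourth moment, $w \mapsto 24w^{2} + c(M)w + \cdots$ is a parabola opening upward, so it attains its maximum on $[0, s^{2}/4]$ at an endpoint; the endpoint $w = s^{2}/4$ is $u = v$, the \emph{symmetric} discrete Laplace with atom $M$, whose centered fourth moment equals $2M\sum_{k \ge 1}k^{4}a^{k}$ with $\tfrac{a}{1-a} = \tfrac{s}{2}$. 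Subtracting this from $\tfrac{9 - M(M^{2}-10M+18)}{M^{4}}$ reduces the claim $\mathbb{E}|X-\mathbb{E}X|^{4} \le \tfrac{9 - M(M^{2}-10M+18)}{M^{4}}$ to a single polynomial inequality in $M \in (0,1)$, which is valid and strict for $M < 1$; this proves \eqref{eq: Maximum 4 ACM discrete LC} with equality precisely at $w = 0$, matching the asserted equality cases.

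I expect the main obstacles to be twofold. The first is upstream: verifying carefully that the stochastic-ordering argument of Theorem~\ref{thm: discrete variance point inequality} really does reduce these two convex functionals to the discrete asymmetric Laplace family (including the bookkeeping for log-concave laws with a two-point mode, which the extremal family never exhibits). The second is the endpoint comparison for the fourth moment — that the geometric law has strictly larger fourth central moment than the symmetric discrete Laplace with the same peak — which, although intuitively clear, requires the explicit polynomial computation. For the variance alone one can sidestep the family optimization: Theorem~\ref{thm: discrete variance point inequality} at $n = m$ the mode gives $2\Var(X) \le \tfrac{1}{M^{2}} - 1 + (\mu - m)^{2}$, and the inequality $|\mu - m| \le \tfrac{1-M}{M}$, which by the same majorization argument reduces to the extremal family where it is simply $|u - v| \le u + v$, then yields \eqref{eq: Maximum variance discrete LC}.
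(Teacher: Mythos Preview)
Your proposal is correct, and the reduction to the discrete asymmetric Laplace family via \Cref{lem: asymmetric Laplace majorizer in discrete} is exactly what the paper does. The optimization over that family, however, is organized differently. The paper keeps the single parameter $q\in\bigl[0,\tfrac{1-M}{1+M}\bigr]$ (with $p$ determined by $M$), computes $\Var$ and $\sigma_4$ as explicit rational functions of $q$, and shows each is monotone by analyzing the sign of the $q$-derivative (for $\sigma_4$ this means factoring a cubic in $q$, identifying $q=\tfrac{1-M}{1+M}$ as its only real root, and checking the sign at $q=0$). Your route instead exploits the representation of $X_{p,q}$ as a difference of two independent geometrics with means $u,v$, passes to the elementary symmetric coordinates $s=u+v=\tfrac{1}{M}-1$ and $w=uv\in[0,s^2/4]$, and observes that $\Var(X)=\tfrac{1-M}{M^2}-2w$ is affine decreasing in $w$ while $\sigma_4(X)=24w^2+c(s)\,w+P(s)$ is convex in $w$; both extrema then sit at $w=0$ once the geometric endpoint is seen to dominate the symmetric one, which (as you note) is a single polynomial check --- indeed $P(s)-\sigma_4^{\mathrm{sym}}(s)=\tfrac{7}{2}s^2+12s^3+\tfrac{15}{2}s^4>0$ for $s>0$. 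Your parameterization makes the variance case essentially a one-liner and trades the paper's derivative/root analysis for an endpoint comparison in the $\sigma_4$ case; the paper's approach, on the other hand, needs no auxiliary structural fact about $X_{p,q}$ beyond the raw moment formulas. The equality analysis is the same in both: strict convexity of $x\mapsto x^2$ and $x\mapsto x^4$ handles the reduction step, and the strictness of your endpoint inequality (resp.\ of the paper's derivative) handles the step within the family.
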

Inequality \eqref{eq: Maximum 4 ACM discrete LC} is new, extending  \eqref{eq: Maximum variance discrete LC} which is due to Jakimiuk-Murawski-Nayar-Slobodianiuk \cite{jakimiuk2024log}, who improved the asymptotically sharp result of Aravinda \cite{aravinda2024entropy}, whose results improved \cite{bobkov2022concentration}.  In all three of the mentioned papers the inequality is used to provide upper bounds on the Levy concentration function of a log-concave random variable.  Our   proofs are considerably shorter than  \cite{aravinda2024entropy, jakimiuk2024log} and avoid the recently developed discrete ``localization'' or ``degrees of freedom'' machinery, see also \cite{alqasem2024conjecture, aravinda2024entropy, aravinda2022concentration,  jakimiuk2024log, marsiglietti2024geometric} for development and recent work.  See \cite{payne1960optimal, lovasz1993random, kannan1995isoperimetric, fradelizi2004extreme, fradelizi2006generalized, eldan2013thin, bobkov2016hyperbolic, klartag2017needle} for the development of the analogous technique in continuous settings.  Moreover, a characterization of equality cases is easily given in the process.

Let us outline the contents of the paper. 
In \Cref{sec: defn} we give definitions and state some relevant lemmas.  
In particular we state a pair of stochastic ordering results related to the ``crossings'' of densities.  
General results of this style go back at least to \cite{karlin1963generalized}.  
\Cref{sec: logconcave cont}  contains the proofs of the anticoncentration inequalities in the continuous setting, while the discrete setting is pursued in \Cref{sec: discrete log-concave}.  
Finally, we give a general density crossings - majorization type result in \Cref{sec: Majorization}. 
This result is not new and is actually less general than that of \cite{karlin1963generalized}, but our proof is perhaps easier for our reader.  At minimum it is included for convenience and completeness.

\section{Definitions} \label{sec: defn}
A function $f: \mathbb{R} \to [0,\infty)$ is log-concave if
\[
    f((1-t) x + ty) \geq f^{1-t}(x)f^t(y)
\]
holds for all $x,y \in \mathbb{R}$ and $t \in [0,1]$.
For a Borel random variable $X$ and probability measure $\nu$ we write $X \sim \nu$ to denote that for all Borel $A$
\[
    \nu(A) = \mathbb{P}(X \in A).
\]
We consider a random variable $X \sim \nu$ to be log-concave with respect to a measure $\mu$, if $\frac{d\nu}{d\mu}$ exists and there is a log-concave function $f$ (called the density) such that 
\[
    f = \frac{d \nu}{d \mu} \hspace{7.5 mm} \hbox{$\mu$-almost surely.}
\]
By a slight abuse of notation, for variables with a density function $f$ we will write $X \sim f$, in the case that $X \sim \nu$ and $f = \frac{d\nu}{d\mu}$.
Recall that for an r.v. $X \sim f$ we denote the essential supremum of its density with repsect to $\mu$ by $M(X) = \norm{f}_{\infty}$. 
Throughout this paper we only consider the case that $\mu$ is the Lebesgue measure on $\mathbb{R}$ or the counting measure on $\mathbb{Z}$.

\begin{definition}
    Given a class of functions $\mathcal{A}$ mapping from a Borel $S \subseteq \mathbb{R}$ to $\mathbb{R}$ we write $X \prec_{\mathcal{A}} Z$ for $S$-valued random variables $X,Z$ such that 
    \[
        \mathbb{E}[f(X)] \leq \mathbb{E}[f(Z)]
    \]
    holds for every $f \in \mathcal{A}$.
\end{definition}
In particular:
\begin{enumerate}
\item When $\mathcal{A}$ is the class of functions from $\mathbb{R}$ to $\mathbb{R}$ with non-negative  $n$th derivative, we write $\prec_n$. 
\item When $\mathcal{A}$ is the class of convex functions, we write $\prec_{cx}$. 
\item When $\mathcal{A}$ is the class of all non-decreasing convex functions, we write $\prec_{icx}$.
\end{enumerate}
To be more precise, we wish to say that $X \prec_n Z$ means that $\mathbb{E}[f(X)] \leq \mathbb{E}[f(Z)]$ for any $f$ with {\it distributional} $n$-th derivative a non-negative measure.  This can be approached classically, by saying that $f$ is increasing in the $n =1$ case, and that for $n \geq 2$, the $(n-2)$th (ordinary) derivative of $f$ is convex, see \cite[page 54]{schwartz1951theorie}. 
We direct the interested reader to \cite{marshall2011inequalities, shaked2007stochastic} for further background.

Let us also collect a pair of elementary observations. The first says that the size of the intersection of two intervals gets smaller as the distance between their barycenters gets larger.  The second is an obvious corollary of the fact that convex functions have convex sub-level sets.  We use here and throughout the notation $|A|$ to denote the Lebesgue measure of a measurable set $A$.  We also use $a \wedge b \coloneqq \min \{ a, b\}$.
\begin{lem} \label{lem: translating intervals}
For $a,b > 0$, the function $f(x) = |[-a,a] \cap [x -b, x+b]|$ can be written as 
\[
    f(x) 
    = 
        \begin{cases}
            2 \ (a \wedge b) & |x| \leq |b-a| \\
            a+b - |x| & |x| \in [ |b-a|, a+b] \\
            0 & |x| \geq a + b. 
        \end{cases}
\]
 In particular $x \mapsto |[-a,a] \cap [x -b, x+b]|$ is a non-increasing function of the absolute value of $x$ while $x \mapsto |[-a,a]^c \cap [x -b, x+b]| = 2 b - f(x)$ is non-decreasing function of the absolute value of $x$.
 \end{lem}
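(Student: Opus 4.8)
The plan is to reduce to the case $x \ge 0$ by symmetry, then compute the length of the intersection by directly tracking its two endpoints, splitting once on the sign of $a-b$. First observe that $f(-x) = f(x)$: the reflection $y \mapsto -y$ sends $[x-b,x+b]$ to $[-x-b,-x+b]$ and fixes $[-a,a]$, hence preserves the Lebesgue measure of the intersection. So it suffices to prove the claimed formula for $x \ge 0$, with $|x|$ replaced by $x$.

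For $x \ge 0$ the set $[-a,a]\cap[x-b,x+b]$ is an interval (possibly empty) with left endpoint $\max(-a,\,x-b)$ and right endpoint $\min(a,\,x+b)$, so $f(x) = \big(\min(a,x+b) - \max(-a,x-b)\big) \vee 0$. I would then note that for $x \ge 0$ one has $x - b \ge -a \iff x \ge b-a$ and $x+b \le a \iff x \le a-b$, and split on whether $a \ge b$ or $a < b$. If $a \ge b$, then $b - a \le 0 \le x$ forces the left endpoint to be $x-b$; subdividing at $x = a-b = |b-a|$ gives length $2b = 2(a\wedge b)$ for $0 \le x \le |b-a|$ and length $a+b-x$ for $|b-a| \le x \le a+b$ (and $0$ beyond). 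If $a < b$, then $a - b < 0 \le x$ forces the right endpoint to be $a$; subdividing at $x = b-a = |b-a|$ gives length $2a = 2(a\wedge b)$ for $0 \le x \le |b-a|$ and length $a+b-x$ for $|b-a| \le x \le a+b$ (and $0$ beyond). In every subcase the length is computed in one line and matches the stated expression, with the pieces agreeing at $x = |b-a|$ since $a+b-|b-a| = 2(a\wedge b)$.

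For the ``in particular'' statements, as a function of $r = |x| \ge 0$ the expression is constant equal to $2(a\wedge b)$ on $[0,|b-a|]$, affine and decreasing (equal to $a+b-r$) on $[|b-a|,a+b]$, and zero for $r \ge a+b$, hence non-increasing in $|x|$; and since $|[-a,a]^c \cap [x-b,x+b]| = |[x-b,x+b]| - |[-a,a]\cap[x-b,x+b]| = 2b - f(x)$, this quantity is a non-decreasing function of $|x|$.

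I do not expect any real obstacle here; this is a bookkeeping lemma. The only points requiring mild care are keeping the four subcases organized and checking that the two formulas agree on the overlap of $[0,|b-a|]$ and $[|b-a|,a+b]$, so that the piecewise description is well defined.
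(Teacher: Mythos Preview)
Your proof is correct and complete. The paper does not actually give a proof of this lemma: it introduces it as one of ``a pair of elementary observations'' and states it without argument. Your direct computation via the symmetry $f(-x)=f(x)$ and the case split on the sign of $a-b$ is exactly the kind of bookkeeping the authors presumably had in mind, so there is nothing to compare.
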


 \begin{lem} \label{lem: convex functions are not rascals}
    Given a convex function $f: [0,\infty) \to \mathbb{R} \cup \{ \infty \}$ such that $f(0) = 0$, $\{ f \leq 0 \}$ is an interval containing $0$.
 \end{lem}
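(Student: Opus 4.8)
The plan is to derive the statement from the standard fact that sublevel sets of convex functions are convex, together with the fact that the convex subsets of $\mathbb{R}$ are exactly the intervals.

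First I would note that $0$ belongs to $\{f \leq 0\}$, since $f(0) = 0 \leq 0$; in particular the set is nonempty. Next I would verify convexity of $\{f \leq 0\}$ directly: if $x,y \in [0,\infty)$ satisfy $f(x) \leq 0$ and $f(y) \leq 0$ — so that both values are finite — then for any $t \in [0,1]$ the convexity inequality gives
\[
    f((1-t)x + ty) \leq (1-t)f(x) + t f(y) \leq 0,
\]
so $(1-t)x + ty \in \{f \leq 0\}$. Hence $\{f \leq 0\}$ is a convex subset of $[0,\infty) \subseteq \mathbb{R}$, and is therefore an interval; and it contains $0$, which is the claim.

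There is essentially no obstacle here. The only point deserving a remark is the extended-real-valued codomain $\mathbb{R} \cup \{\infty\}$, but this is harmless: the convexity inequality is invoked only at points where $f(x)$ and $f(y)$ are finite, so the right-hand side above is a genuine nonpositive real number. If one wishes, one can further observe that because $0$ is the minimum of the domain, this interval necessarily has $0$ as its left endpoint, i.e.\ it is of the form $[0,c]$ or $[0,c)$ for some $c \in [0,\infty]$; but the lemma as stated requires no more than what is shown above.
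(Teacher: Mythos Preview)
Your proof is correct; it is the standard argument. The paper itself states this lemma as an ``elementary observation'' and gives no proof, so there is nothing to compare against beyond noting that your argument is exactly the one any reader would supply.
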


Our last results for this section are the density crossing to majorization results for densities.

\begin{lemma}\label{lem:one_crossing}
        Suppose that $\phi \in L^1(\mathbb{R}, \mu)$ is such that $\int \phi d\mu = 0$ and there exists $x'$ such that $\phi(x) \leq 0$ for $x \leq x'$ and $\phi(x) \geq 0$ for $x > x'$. Then for $f$ non-decreasing,
    \[
        \int f \phi d\mu \geq 0.
    \]
       Moreover, if there exists $f$ strictly increasing (in the sense that $f(x) < f(y)$ for $x < y$) 
       such that $\int f \phi d\mu = 0$, then $\phi = 0$ $\mu$-almost surely.
\end{lemma}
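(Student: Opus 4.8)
The plan is to reduce the inequality to the pointwise non-negativity of a single integrand. Since $\int \phi\, d\mu = 0$, we may subtract any constant multiple of $\phi$ without changing the integral $\int f\phi\,d\mu$; the useful choice is the constant $c \coloneqq f(x')$, giving
\[
    \int f \phi \, d\mu = \int (f - c)\,\phi \, d\mu .
\]
Because $f$ is non-decreasing we have $f(x) - c \le 0$ for $x \le x'$ and $f(x) - c \ge 0$ for $x \ge x'$. Pairing this with the hypothesis that $\phi \le 0$ on $(-\infty, x']$ and $\phi \ge 0$ on $(x', \infty)$, the product $(f(x)-c)\phi(x)$ is non-negative for every $x$ (the value at $x = x'$ being $0$). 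Hence $\int (f-c)\phi\,d\mu \in [0,\infty]$ is well-defined and non-negative, and it equals $\int f\phi\,d\mu$ whenever the latter makes sense; this proves the first assertion.

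For the equality case, suppose $f$ is strictly increasing and $\int f\phi\,d\mu = 0$. Then $\int (f-c)\phi\,d\mu = 0$ with a non-negative integrand, so $(f(x)-c)\phi(x) = 0$ for $\mu$-almost every $x$. Strict monotonicity forces $f(x) - c \neq 0$ for every $x \neq x'$, hence $\phi(x) = 0$ for $\mu$-almost every $x \neq x'$. It then remains only to handle the single point $x'$: if $\mu(\{x'\}) = 0$ (as when $\mu$ is Lebesgue measure, or the counting measure on $\mathbb{Z}$ with $x'\notin\mathbb{Z}$) we are already done, while otherwise the identity $\int \phi\,d\mu = 0$ collapses to $\phi(x')\,\mu(\{x'\}) = 0$, so $\phi(x') = 0$. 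In either case $\phi = 0$ $\mu$-almost surely.

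I do not expect a serious obstacle here; the only points requiring a little care are the legitimacy of subtracting the constant $c = f(x')$ (which uses exactly $\int \phi\,d\mu = 0$), the correct use of the \emph{strict} inequality $f(x) < f(y)$ only for $x < y$ away from $x'$ when extracting $\phi = 0$, and the treatment of the crossing point $x'$ itself — the latter being genuinely necessary in the counting-measure case, where $\{x'\}$ may carry positive mass. One could alternatively route the proof through the layer-cake identity $f(x) - c = \int \big(\mathbf{1}_{\{x > s\}} - \mathbf{1}_{\{x' > s\}}\big)\,df(s)$ together with Fubini, reducing matters to the sign of $\int_{\{x>s\}} \phi\,d\mu$, but the direct pointwise argument above is cleaner and needs no integrability hypothesis on $f$.
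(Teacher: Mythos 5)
Your proof is correct and coincides with the paper's own approach: the paper states (after proving \Cref{lem:two_crossing}) that \Cref{lem:one_crossing} follows by setting $\tilde f(x) = f(x) - f(x')$, which is exactly your choice $c = f(x')$, yielding a non-negative integrand, and the treatment of the crossing point $x'$ via $\int\phi\,d\mu=0$ is the scalar analogue of the paper's $2\times 2$ matrix argument in the two-crossing case. Nothing to add.
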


We note that when $\phi = g_2 - g_1$ is the difference between two probability density functions one automatically obtains $\int \phi d\mu = 0$. 
Thus the existence of an $x'$ above implies that for $X_1 \sim g_1$ and $X_2 \sim g_2$, that $X_1 \prec_1 X_2$.
Additionally if $\mathbb{E}[f(X_1)] = \mathbb{E}[f(X_2)]$ for a strictly increasing function $f$ (for instance, $f(x) = x$ will be used below) then $g_1 = g_2$ $\mu$-almost surely.

 \begin{lemma}\label{lem:two_crossing}
    Suppose that $\phi \in L^1(\mathbb{R}, \mu)$ is such that $\psi(x) \coloneqq x \phi(x) \in L^1(\mathbb{R}, \mu)$ and satisfies $\int \psi d\mu = \int \phi d\mu = 0$, while there exists $x_0 < x_1$ such that $\phi(x) \leq 0$ for $x \in (x_0, x_1)$ and $\phi(x) \geq 0$ for $x \notin [x_0,x_1]$. 
    Then for $f$ convex,
    \[
        \int f \phi d\mu \geq 0.
    \]
    Moreover, if there exists $f$ strictly convex (in the sense that $f((1-t) x + ty) < (1-t) f(x) + t f(y)$ for $t \in (0,1)$ and $x \neq y$) 
    such that $\int f \phi d\mu = 0$, then $\phi = 0$ $\mu$-almost surely.
\end{lemma}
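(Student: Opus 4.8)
We sketch the proof we would give.

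The plan is to use the two vanishing‑moment hypotheses $\int \phi\,d\mu = \int x\phi(x)\,d\mu = 0$ to subtract a well‑chosen affine function from $f$. For any affine $\ell(x)=a+bx$ we have $\int \ell\phi\,d\mu = a\int\phi\,d\mu + b\int x\phi\,d\mu = 0$, and this integral is finite because $\phi$ and $\psi = x\phi$ lie in $L^1(\mathbb{R},\mu)$; hence $\int f\phi\,d\mu = \int (f-\ell)\phi\,d\mu$, and it suffices to exhibit one affine $\ell$ for which $g \coloneqq f-\ell$ has, visibly, the same sign as $\phi$ throughout $\supp\mu$.

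First I would dispose of the trivial case in which $\phi\ge 0$ $\mu$‑a.e. (then $\int\phi\,d\mu=0$ gives $\phi=0$), and otherwise arrange that $x_0$ and $x_1$ lie in $\supp\mu$ — automatic when $\mu$ is Lebesgue measure, and in the discrete case obtained by replacing $x_0,x_1$ with the least and greatest integers at which $\phi<0$. Then take $\ell$ to be the unique affine function with $\ell(x_0)=f(x_0)$ and $\ell(x_1)=f(x_1)$, so that $g=f-\ell$ is convex and vanishes at $x_0$ and $x_1$. The defining inequality of convexity gives at once $g\le 0$ on $[x_0,x_1]$ (writing a point of the interval as a convex combination of $x_0,x_1$) and $g\ge 0$ off $[x_0,x_1]$ (for $x>x_1$, write $x_1$ as a convex combination of $x_0$ and $x$, and symmetrically for $x<x_0$). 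Matching this against the hypothesis on the signs of $\phi$ yields $g(x)\phi(x)\ge 0$ for all $x$ (the product being $0$ at $x=x_0,x_1$), hence $\int f\phi\,d\mu = \int g\phi\,d\mu \ge 0$.

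For the equality clause, I would suppose $f$ strictly convex with $\int f\phi\,d\mu=0$; then $\int g\phi\,d\mu=0$ with a nonnegative integrand, so $g\phi=0$ $\mu$‑a.e. Strict convexity of $g$, together with $g(x_0)=g(x_1)=0$, forces $g<0$ on $(x_0,x_1)$ and $g>0$ off $[x_0,x_1]$, so $g$ vanishes only at $x_0$ and $x_1$; hence $\phi=0$ $\mu$‑a.e. on $\mathbb{R}\setminus\{x_0,x_1\}$. When $\mu$ is Lebesgue measure this set is conull and we are done. In the discrete case $\phi$ is now supported on $\{x_0,x_1\}$, and the two moment identities become $\phi(x_0)+\phi(x_1)=0$ and $x_0\phi(x_0)+x_1\phi(x_1)=0$, a linear system with determinant $x_1-x_0\ne 0$, so $\phi(x_0)=\phi(x_1)=0$ and $\phi\equiv 0$.

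I expect the only real friction to be bookkeeping: checking that every integral written is well defined (guaranteed by $\phi,\psi\in L^1$ together with the bound $|g|\le |f|+|\ell|$ on $\supp\mu$), and remembering in the discrete setting that $\{x_0,x_1\}$ need not be $\mu$‑null, which is exactly why the small linear‑system step is needed to close the equality case. If one wishes to allow convex $f$ with values in $\mathbb{R}\cup\{+\infty\}$, one can truncate to a large interval and let it grow; for the uses of this lemma in the paper $f$ is finite‑valued and this is unnecessary. Beyond that, every inequality is immediate from convexity.
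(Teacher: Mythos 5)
Your proof is correct and takes essentially the same route as the paper: subtract the chord affine function $\ell$ interpolating $f$ at $x_0,x_1$ so that $g\phi=(f-\ell)\phi\geq 0$ pointwise, and in the equality case conclude that $\phi$ vanishes $\mu$-a.e.\ off $\{x_0,x_1\}$ and then use the two moment conditions to kill any remaining mass there. The paper's endgame is marginally cleaner --- it applies the $2\times 2$ linear system $\Phi u=0$ with $u=(\mu\{x_0\},\mu\{x_1\})^{\!\top}$ directly to the original $x_0,x_1$ for an arbitrary Borel $\mu$, avoiding your preliminary steps of disposing of the $\phi\geq 0$ case and shifting $x_0,x_1$ into $\supp\mu$, and in particular avoiding the degenerate subcase (a single integer with $\phi<0$) that your chord construction leaves implicit --- but these are cosmetic differences, not a different argument.
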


Again, when $\phi = g_2 - g_1$ for densities that ``cross-twice'' in the sense of the existence of $x_1$ and $x_2$ as above, then for random variables $X_i \sim g_i$, if $\mathbb{E}X_1 = \mathbb{E}X_2$ then one has $X_1 \prec_{cx} X_2$. 
Moreover, equality as in $\mathbb{E}f(X_1) = \mathbb{E}f(X_2)$ for strictly convex $f$ implies $g_1 = g_2$ $\mu$-almost surely.

\section{Lebesgue measure} \label{sec: logconcave cont}
In this section we consider random variables that are log-concave with respect to the Lebesgue measure on $\mathbb{R}$, and derive results for the interaction between their variance and mean with their density maximums and mode.  
Within this section, the term ``log-concave random variables'' will always refer to log-concave random variables with respect to the Lebesgue measure. 
Let us begin by defining asymmetric Laplace densities and deriving formulas for their moments. 

\begin{definition} \label{def: asymmetric Laplace Lebesgue}
     For $\lambda \in [0,\infty)^2 - \{0\}$ we define a mode-centered asymmetric Laplace density by
    \[
        g_{\lambda}(x) = \frac{e^{- |x|/\lambda_1} \mathbbm{1}_{(-\infty,0]}(x) 
        +  e^{- x / \lambda_2}  \mathbbm{1}_{(0,\infty)}(x) }{\lambda_1 + \lambda_2} . 
    \]
    when $\lambda_i \in (0,\infty)$.
    We include the Laplace distribution as the special case $\lambda_1=\lambda_2$.
    For $\lambda_1 > 0$, we define 
    $$
    g_{(\lambda_1, 0)}(x) = \frac{e^{- |x|/\lambda_1} \mathbbm{1}_{(-\infty,0]}(x) }{\lambda_1},
    $$
    and analogously, for $\lambda_2 > 0$, we define
    $$
    g_{(0, \lambda_2)}(x) \coloneqq \frac{e^{-x/\lambda_2} \mathbbm{1}_{[0,\infty)}(x)}{\lambda_2}. 
    $$ 
    
    We call a random variable asymmetric Laplace when its density function with respect to the Lebesgue measure can be written as $x \mapsto g_{\lambda}(x - m)$ for $g_\lambda$ the mode-centered asymmetric Laplace density associated to some $\lambda$ and $m \in \mathbb{Z}$. 
\end{definition}

Observe that $\norm{g_\lambda}_{\infty} = \frac 1 {\lambda_1 + \lambda_2}$.

\begin{lem}\label{lem: asymmetric Laplace Lebesgue moments}
    For $X_\lambda$ an asymmetric Laplace random variable with mode at $0$,
    \[
        \mathbb{E}[e^{t X_\lambda}] = \frac{1}{(1 + t \lambda_1)(1 - t \lambda_2 )} = 1 + t (\lambda_2 - \lambda_1) + t^2 ( \lambda_2^2 - \lambda_1 \lambda_2 + \lambda_1^2) + O(t^3).
    \]
    In particular
    \begin{align*}
        \mathbb{E}[X_\lambda] &= \lambda_2 - \lambda_1
            \\
        \Var[X_\lambda] &= \lambda_2^2 + \lambda_1^2.
    \end{align*}
\end{lem}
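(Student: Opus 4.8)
The plan is to compute the moment generating function $\mathbb{E}[e^{tX_\lambda}]$ directly by splitting the integral over the two half-lines, and then extract the first two moments from its Taylor expansion at $t=0$. First I would treat the generic case $\lambda_1,\lambda_2\in(0,\infty)$. Writing $X_\lambda\sim g_\lambda$ with $g_\lambda$ as in Definition~\ref{def: asymmetric Laplace Lebesgue}, one has
\[
    \mathbb{E}[e^{tX_\lambda}]
    = \frac{1}{\lambda_1+\lambda_2}\left( \int_{-\infty}^0 e^{tx} e^{x/\lambda_1}\,dx + \int_0^\infty e^{tx} e^{-x/\lambda_2}\,dx \right).
\]
The first integral equals $\tfrac{1}{t+1/\lambda_1} = \tfrac{\lambda_1}{1+t\lambda_1}$ provided $t > -1/\lambda_1$, and the second equals $\tfrac{1}{1/\lambda_2 - t} = \tfrac{\lambda_2}{1-t\lambda_2}$ provided $t < 1/\lambda_2$. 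Adding these and putting over a common denominator, the numerator is $\lambda_1(1-t\lambda_2) + \lambda_2(1+t\lambda_1) = \lambda_1+\lambda_2$, which cancels the prefactor $\tfrac1{\lambda_1+\lambda_2}$, leaving $\mathbb{E}[e^{tX_\lambda}] = \bigl[(1+t\lambda_1)(1-t\lambda_2)\bigr]^{-1}$ as claimed, valid for $t$ in the interval $(-1/\lambda_1, 1/\lambda_2)$ containing $0$.

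Next I would expand. Since $\tfrac{1}{1+t\lambda_1} = 1 - t\lambda_1 + t^2\lambda_1^2 + O(t^3)$ and $\tfrac{1}{1-t\lambda_2} = 1 + t\lambda_2 + t^2\lambda_2^2 + O(t^3)$, multiplying gives
\[
    \mathbb{E}[e^{tX_\lambda}] = 1 + t(\lambda_2-\lambda_1) + t^2(\lambda_1^2 - \lambda_1\lambda_2 + \lambda_2^2) + O(t^3).
\]
Matching with $\mathbb{E}[e^{tX_\lambda}] = 1 + t\,\mathbb{E}[X_\lambda] + \tfrac{t^2}{2}\mathbb{E}[X_\lambda^2] + O(t^3)$ yields $\mathbb{E}[X_\lambda] = \lambda_2-\lambda_1$ and $\mathbb{E}[X_\lambda^2] = 2(\lambda_1^2-\lambda_1\lambda_2+\lambda_2^2)$, hence
\[
    \Var[X_\lambda] = \mathbb{E}[X_\lambda^2] - (\mathbb{E}[X_\lambda])^2 = 2\lambda_1^2 - 2\lambda_1\lambda_2 + 2\lambda_2^2 - (\lambda_2-\lambda_1)^2 = \lambda_1^2 + \lambda_2^2.
\]
Finally, the degenerate cases $\lambda = (\lambda_1,0)$ and $\lambda=(0,\lambda_2)$ follow either by taking $\lambda_2\downarrow 0$ (resp.\ $\lambda_1\downarrow 0$) in the formula — the mgf and its low-order coefficients depend continuously on $\lambda$ — or by redoing the single-half-line integral directly, which reproduces the mgf of a one-sided exponential. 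I do not expect a genuine obstacle here: the only things to be careful about are the sign conventions in the exponents of $g_\lambda$ (so that the integrals converge on a neighborhood of $t=0$) and the bookkeeping of the $O(t^3)$ terms; both are routine. Since a random variable that is asymmetric Laplace with a general mode $m$ is a translate of $X_\lambda$, its variance is unchanged and its mean is shifted by $m$, so stating the result for mode $0$ loses no generality.
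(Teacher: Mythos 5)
Your proof is correct and is exactly the ``direct computation'' the paper alludes to: split the integral over the two half-lines to get the MGF, Taylor-expand, and read off the first two moments. The handling of the degenerate cases $\lambda_i=0$ and the remark on translation invariance are careful but routine additions; nothing diverges from the paper's intended argument.
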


\begin{proof}
    Direct computation.
\end{proof}

The key technical result is the following.

\begin{prop} \label{prop: match a point and convex domination}
    Given a point $t \in \mathbb{R}$ and a log-concave random variable $X \sim f$, there exists an asymmetric Laplace random variable $X_\lambda \sim f_\lambda$ such that \begin{align*}
        f(t) = f_\lambda(t) \qquad \text{ and } \qquad X \prec_{cx} X_\lambda.
    \end{align*}
\end{prop}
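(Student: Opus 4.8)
The plan is to construct, for the given log-concave $X \sim f$ and point $t$, an asymmetric Laplace density $f_\lambda$ (centered at its mode $m$) whose graph matches $f$ at $t$ and whose difference $\phi = f_\lambda - f$ crosses zero exactly twice in the sense required by Lemma~\ref{lem:two_crossing}, then appeal to that lemma together with a mean-matching step to upgrade $\prec_1$-type control to $\prec_{cx}$. First I would reduce to the case $f(t)>0$ (if $f(t)=0$ the point $t$ lies outside or on the boundary of $\mathrm{supp}(f)$ and the claim is either vacuous or handled by a one-sided degenerate Laplace $g_{(\lambda_1,0)}$ or $g_{(0,\lambda_2)}$). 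Writing $f = e^{-V}$ with $V$ convex on its support, the key geometric fact is that an asymmetric Laplace density is $e^{-W}$ with $W$ piecewise linear: two linear pieces meeting at the mode $m$ with slopes $-1/\lambda_1$ (left) and $1/\lambda_2$ (right). So $\phi = f_\lambda - f$ changes sign exactly where the convex function $V - W$ changes sign, and since $W$ is piecewise linear with a single downward kink, $V - W$ is convex on the left ray and on the right ray of $m$ and can be arranged to be $\le 0$ on an interval and $\ge 0$ outside it — i.e. $f_\lambda \ge f$ on a central interval and $f_\lambda \le f$ outside. This is the "two-crossing" configuration. Concretely, I would pick a mode $m$ for $f_\lambda$ and parameters $(\lambda_1,\lambda_2)$ — three real parameters subject to the one scalar constraint $f_\lambda(t) = f(t)$ — and then use the remaining freedom to also impose $\mathbb{E}X_\lambda = \mathbb{E}X$, leaving one parameter to guarantee that $f_\lambda$ is a bona fide probability density dominating $f$ near its peak and dominated by $f$ in the tails.

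The cleanest route to organizing the construction: fix the maximum. Either take $\|f_\lambda\|_\infty = f(t)$ itself if $f(t) = \|f\|_\infty$, or more generally I would let the construction be driven by matching at $t$ and by a continuity/intermediate-value argument. Set $s \coloneqq f(t) > 0$ and consider the family of asymmetric Laplace densities with a fixed small enough maximum $M_0 \ge s$ (recall $\|g_\lambda\|_\infty = 1/(\lambda_1+\lambda_2)$, so fixing the max fixes $\lambda_1 + \lambda_2$); within this one-parameter family (parametrized by, say, $\lambda_2 - \lambda_1$, equivalently by the skewness, with $m$ then pinned by the requirement $f_\lambda(t) = s$) I would run an intermediate value argument on $\mathbb{E}X_\lambda$ to hit $\mathbb{E}X$. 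The point of fixing the max is Lemma~\ref{lem:sublevel_set_formula}-type rigidity alluded to in the introduction: all asymmetric Laplaces with a given maximum are equimeasurable, which makes the sign-change structure of $\phi$ transparent. Once $f_\lambda$ is chosen with $f_\lambda(t) = f(t)$ and $\mathbb{E}X_\lambda = \mathbb{E}X$, log-concavity of $f$ forces $\log f$ to lie below the tangent-type piecewise linear $\log f_\lambda$ near the peak and above it in the tails — more carefully, $\log f - \log f_\lambda$ is convex on each of the two rays emanating from $m$, so on each ray it is either one-signed or changes sign once, and combined with $\int(f_\lambda - f) = 0$ and $\int x(f_\lambda - f) = 0$ this pins down the global two-crossing pattern $\{\phi \ge 0\}$ = complement of a bounded interval. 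Then Lemma~\ref{lem:two_crossing} applied to $\phi = f_\lambda - f$ gives $\int g \phi \, d\mu \ge 0$ for every convex $g$, i.e. $X \prec_{cx} X_\lambda$, which is exactly the claim.

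The main obstacle I expect is verifying that the two-scalar system $f_\lambda(t) = f(t)$, $\mathbb{E}X_\lambda = \mathbb{E}X$ is simultaneously solvable and that the resulting $\phi$ genuinely has the single-bounded-interval sign pattern rather than some degenerate or three-crossing configuration; this is where one must carefully track the kink location $m$ relative to $t$ and to $\mathrm{supp}(f)$, and where the boundary and degenerate cases ($t$ at the edge of the support, $f$ itself asymmetric Laplace, $\lambda_1 = 0$ or $\lambda_2 = 0$) need separate, if easy, treatment. A secondary technical point is integrability: to invoke Lemma~\ref{lem:two_crossing} one needs $x\phi(x) \in L^1$, which holds because log-concave densities have exponential tails and hence finite moments of all orders, and asymmetric Laplaces likewise. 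I would handle the sign-pattern verification by the convexity-on-rays observation above: since two log-concave densities that are not identical cross at most twice (a standard fact following from the convexity of $\log f - \log f_\lambda$ on intervals of linearity of the latter), and since $\int \phi = 0$ with $f_\lambda$ "more peaked", the crossings must be a single interval, giving exactly the hypothesis of Lemma~\ref{lem:two_crossing}.
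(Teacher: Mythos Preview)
Your approach is correct and is essentially the paper's: build a one-parameter family of asymmetric Laplace densities matching $f$ at $t$, use the intermediate value theorem to match the mean, observe that the log-ratio is convex on each half-line so that $\{f_\lambda\le f\}$ is an interval, and then apply Lemma~\ref{lem:two_crossing}. The paper streamlines your setup by simply placing the mode of the asymmetric Laplace \emph{at} $t$: then $g_\lambda(t)=1/(\lambda_1+\lambda_2)$ is automatically the maximum, the constraint $g_\lambda(t)=f(t)$ fixes $\lambda_1+\lambda_2=1/f(t)$, and the remaining single parameter $\lambda_1\in[0,1/f(t)]$ sweeps $\mathbb{E}X_\lambda$ monotonically across $\mathbb{E}X$, with the endpoints being the one-sided exponentials where Lemma~\ref{lem:one_crossing} supplies the bracketing $\mathbb{E}X_{\lambda(1/f(t))}\le\mathbb{E}X\le\mathbb{E}X_{\lambda(0)}$. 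This dissolves the three-parameter bookkeeping, the ``fix a maximum $M_0\ge s$'' device, and the floating-mode obstacle you anticipate; with $m=t$ you also get $(V-W)(t)=0$ for free, which is what makes the convexity-on-rays argument land cleanly (and note the orientation: $\{f_\lambda\le f\}$ is the central interval, so $f_\lambda\ge f$ on the tails, as required for $X\prec_{cx}X_\lambda$---you state this backwards once before correcting yourself).
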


\begin{proof}
    By the translation invariance of the problem, it suffices to consider $t = 0$, and so we write $X \sim g$ and $X_\lambda \sim g_\lambda$ to match the notation of \Cref{def: asymmetric Laplace Lebesgue}.  
     Taking $\lambda_2 = \frac 1 {g(0)} - \lambda_1$ we have a class of asymmetric Laplace densities that satisfy $g_\lambda(0) = g(0)$. 
     Define convex functions $f_i : [0,\infty) \to \mathbb{R}\cup \{ \infty \}$ by
     \begin{align*}
        f_1(x) \coloneqq \log g_\lambda(x) - \log g(x)
        \qquad \hbox{ and } \qquad
        f_2(x) \coloneqq \log g_\lambda(-x) - \log g(-x)
     \end{align*}
     so that $f_i(0) = 0$.  
     By \Cref{lem: convex functions are not rascals}, 
     $I_i = \{ f_i \leq 0 \}$ are both intervals containing the origin, and hence
     \begin{equation}\label{eq:at_most_two_crossings}
        \{ g_\lambda \leq g \} = I_1 \cup (-I_2).
     \end{equation}
     is an interval.  
     Define $\lambda(t) = (t, \frac 1 {g(0)} - t)$ 
     and define $\Phi: [0, 1/g(0)] \to \mathbb{R}$ by 
     \[ \Phi(t) = \mathbb{E} X_{\lambda(t)} = \frac{1}{g(0)} - 2 t. \]
     Note that $\Phi$ is continuous; 
     we will show that $\Phi(0) \geq \mathbb{E}[X] \geq \Phi(1/g(0))$ 
     and apply the intermediate value theorem. 
     
     Indeed, $g_{\lambda(0)} = 0\leq g$ on $(-\infty,0]$ and hence 
     \[
        \{ g_{\lambda(0)} \leq  g \}
     \]
     is an infinite interval containing all negative numbers. 
     It follows from 
     \Cref{lem:one_crossing}
     that $X \prec_1 X_{\lambda(0)}$, 
     and in particular $\mathbb{E}X \leq \mathbb{E}X_{\lambda(0)} = \Phi(0)$. 
     Similarly, $g_{\lambda(g^{-1}(0))} = 0$ on $[0, \infty)$ and hence
     \[
        \{ g_{\lambda(g^{-1}(0))} \leq  g \}
     \]
     is an infinite interval containing containing all positive numbers.  
     Thus,
     $X_{\lambda(g^{-1}(0))}\prec_1 X$, and $\Phi(1/g(0)) \leq \mathbb{E}[X]$.  
     Thus by the intermediate value theorem there exists $t$ such that 
     \[
        \mathbb{E}[X_{\lambda(t)}] = \mathbb{E}[X] .
     \]
     
     As shown in \eqref{eq:at_most_two_crossings}, the densities $g_{\lambda(t)}$ and $g$ can cross at most two times.
     If they cross exactly once (i.e. exactly one of the $I_i$ is infinite), then 
     \Cref{lem:one_crossing}
     contradicts the equality of the means; 
     hence the densities must either cross twice (i.e. the $I_i$ must both finite) or never cross (i.e. $I_1 = I_2 = [0, \infty)$). 
     If the densities cross twice, 
     \Cref{lem:two_crossing}
     shows that $X \prec_{cx} X_{\lambda(t)}$;
     and if the densities never cross, then $g_{\lambda(t)} = g$ and $X \prec_{cx} X_{\lambda(t)}$ is trivial.
\end{proof}

We can now prove \Cref{thm: variance function and distance mean to mode}, which we restate for the convenience of the reader.
\begin{theorem}
  Given $X$ a random variable with mean $\mu$ and log-concave density $f$ and $t \in \mathbb{R}$
  \[
      2\Var(X) \leq \frac{1}{f^2(t)} + (\mu - t)^2 
  \]

\begin{proof}
  It suffices to consider $t$ belonging to the support of $f$, else $\frac{1}{f^2(t)} = \infty$.  As the inequality is translation invariant, we assume without loss of generality that $t = 0$.  Taking $X_\lambda$ as in \Cref{prop: match a point and convex domination}, we necessarily have  $\lambda_2 = \frac 1 2 \left( \frac 1 {f(0)} + \mu \right)$ and $\lambda_1 = \frac 1 2 \left( \frac 1 {f(0)} - \mu \right)$, $f_{X_\lambda}(0) = f(0)$, $\mathbb{E}[X_\lambda] = \mathbb{E}[X]$, and $X \prec_{cx} X_\lambda$.  In particular 
  \[
  \frac{1}{2} \left( \frac 1 {f^2(0)} + \mu^2 \right) = \lambda_1^2 + \lambda_2^2 = \Var(X_\lambda) \geq \Var(X),
  \]
  which yields our result.
\end{proof}
\end{theorem}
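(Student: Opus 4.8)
The plan is to reduce the inequality to the asymmetric Laplace case via the stochastic ordering result of Proposition~\ref{prop: match a point and convex domination}, and then invoke the explicit moment formulas from Lemma~\ref{lem: asymmetric Laplace Lebesgue moments}. First I would note that the inequality is trivially true if $t$ is outside the support of $f$, since then $f(t) = 0$ and the right-hand side is $+\infty$; so we may assume $f(t) > 0$. Next, since both sides are invariant under the translation $X \mapsto X - t$ (which sends $\mu \mapsto \mu - t$ and leaves $\Var(X)$ and the value of the density at the matched point unchanged), I would reduce to the case $t = 0$.

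With $t = 0$ fixed, I would apply Proposition~\ref{prop: match a point and convex domination} to obtain an asymmetric Laplace random variable $X_\lambda \sim g_\lambda$ with $g_\lambda(0) = f(0)$ and $X \prec_{cx} X_\lambda$. The key point is to identify $\lambda_1, \lambda_2$ explicitly from the two matching conditions. From $\|g_\lambda\|_\infty = g_\lambda(0) = \frac{1}{\lambda_1 + \lambda_2}$ we get $\lambda_1 + \lambda_2 = \frac{1}{f(0)}$, and since the convex ordering was constructed (in the proof of the proposition) by choosing $t$ so that the means agree, Lemma~\ref{lem: asymmetric Laplace Lebesgue moments} gives $\mathbb{E}[X_\lambda] = \lambda_2 - \lambda_1 = \mu$. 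Solving this $2 \times 2$ linear system yields $\lambda_2 = \frac12\left(\frac{1}{f(0)} + \mu\right)$ and $\lambda_1 = \frac12\left(\frac{1}{f(0)} - \mu\right)$.

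Finally, since $x \mapsto x^2$ is convex, $X \prec_{cx} X_\lambda$ gives $\mathbb{E}[X^2] \leq \mathbb{E}[X_\lambda^2]$, and since the means coincide this is exactly $\Var(X) \leq \Var(X_\lambda)$. By Lemma~\ref{lem: asymmetric Laplace Lebesgue moments}, $\Var(X_\lambda) = \lambda_1^2 + \lambda_2^2 = \frac12\left(\frac{1}{f^2(0)} + \mu^2\right)$, using the identity $\lambda_1^2 + \lambda_2^2 = \frac12\big((\lambda_1+\lambda_2)^2 + (\lambda_2-\lambda_1)^2\big)$. Rearranging gives $2\Var(X) \leq \frac{1}{f^2(0)} + \mu^2$, which is the claim after undoing the translation. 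I do not anticipate a serious obstacle here, since the heavy lifting is entirely contained in Proposition~\ref{prop: match a point and convex domination}; the only points requiring a word of care are checking that $\lambda_1 \geq 0$ (which holds because $\mu \leq 1/f(0)$ — itself a consequence of the argument, or one may allow the degenerate case $\lambda_1 = 0$ covered by Definition~\ref{def: asymmetric Laplace Lebesgue}) and confirming that the convex-ordering and mean-matching conclusions of the proposition are being quoted correctly.
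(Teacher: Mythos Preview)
Your proposal is correct and follows essentially the same route as the paper: reduce to $t=0$ by translation invariance, invoke Proposition~\ref{prop: match a point and convex domination} to get an asymmetric Laplace $X_\lambda$ with matching density value at $0$ and $X \prec_{cx} X_\lambda$, solve for $\lambda_1,\lambda_2$, and compare variances via Lemma~\ref{lem: asymmetric Laplace Lebesgue moments}. The only cosmetic difference is that you spell out the identity $\lambda_1^2+\lambda_2^2 = \tfrac12\big((\lambda_1+\lambda_2)^2+(\lambda_2-\lambda_1)^2\big)$ and comment on $\lambda_1\ge 0$, neither of which the paper makes explicit; note also that mean-matching need not be read off the proposition's proof, since $X \prec_{cx} X_\lambda$ already forces $\mathbb{E}X = \mathbb{E}X_\lambda$.
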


In what follows we pursue inequalities for log-concave random variables that are independent of the position of the mean.

{\color{black}
\begin{lemma}\label{lem:sublevel_set_formula}
The $t$-superlevel set of the density $g_{\lambda}$ of a mode-centered asymmetric Laplace $X_\lambda$ with , is given by the formula
\[ \{ g_{\lambda} > t \} = \log((\lambda_1 + \lambda_2)t)\cdot(-\lambda_1,\lambda_2).  \]
In particular, 
$$
    | \{ g_\lambda > t \}| = \frac{1}{M} \log \frac{t}{M},
$$
where $M = M(X_\lambda)$, and thus  asymmetric Laplace densities with $\phi_{\lambda,w} = g_{\lambda}(\,\cdot-w)$ with the same maximum value are equimeasurable. 
\end{lemma}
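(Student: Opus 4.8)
The plan is to directly compute the superlevel sets of the mode-centered asymmetric Laplace density $g_\lambda$ by unwinding the definition piecewise, then read off the measure and the equimeasurability claim.

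First I would split into the two branches of $g_\lambda$. For $x \le 0$ we have $g_\lambda(x) = \frac{e^{x/\lambda_1}}{\lambda_1+\lambda_2}$ (writing $|x| = -x$), which is increasing in $x$, so the condition $g_\lambda(x) > t$ becomes $e^{x/\lambda_1} > (\lambda_1+\lambda_2)t$, i.e. $x/\lambda_1 > \log((\lambda_1+\lambda_2)t)$, i.e. $x > \lambda_1 \log((\lambda_1+\lambda_2)t)$. Intersecting with $(-\infty,0]$ and noting $\log((\lambda_1+\lambda_2)t) \le 0$ precisely when $t \le M$ with $M = \frac{1}{\lambda_1+\lambda_2}$ (which is the relevant regime, since otherwise the superlevel set is empty), this gives the left piece $\big(\lambda_1\log((\lambda_1+\lambda_2)t),\,0\big]$. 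Symmetrically, for $x > 0$, $g_\lambda(x) = \frac{e^{-x/\lambda_2}}{\lambda_1+\lambda_2}$ is decreasing, and $g_\lambda(x) > t$ becomes $x < -\lambda_2\log((\lambda_1+\lambda_2)t) = \lambda_2 \log\frac{1}{(\lambda_1+\lambda_2)t}$, giving the right piece $\big[0,\, -\lambda_2\log((\lambda_1+\lambda_2)t)\big)$. Combining, and writing $s \coloneqq \log((\lambda_1+\lambda_2)t) \le 0$, the superlevel set is $(\lambda_1 s, -\lambda_2 s) = s\cdot(-\lambda_1,\lambda_2)$ (the flip of sign accounting for $s \le 0$), which is exactly the claimed formula $\{g_\lambda > t\} = \log((\lambda_1+\lambda_2)t)\cdot(-\lambda_1,\lambda_2)$; one should also handle the degenerate cases $\lambda_1 = 0$ or $\lambda_2 = 0$, where one branch is absent, but the formula persists with the convention that the corresponding endpoint is $0$.

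Next, the Lebesgue measure of this interval is $|\lambda_1 s| + |\lambda_2 s| = (\lambda_1 + \lambda_2)|s| = (\lambda_1+\lambda_2)\big(-\log((\lambda_1+\lambda_2)t)\big)$. Substituting $M = \frac{1}{\lambda_1+\lambda_2} = M(X_\lambda)$ (recall the remark $\norm{g_\lambda}_\infty = \frac{1}{\lambda_1+\lambda_2}$), this is $\frac{1}{M}\big(-\log(t/M)\big) = \frac{1}{M}\log\frac{M}{t}$. The statement as written says $\frac 1M \log\frac{t}{M}$, which is negative for $t < M$; I would note this is a sign typo and the correct expression is $\frac1M\log\frac Mt$ — in any case, the point is that $|\{g_\lambda > t\}|$ depends on $\lambda$ only through $M = M(X_\lambda)$. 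Finally, for $\phi_{\lambda,w}(\cdot) = g_\lambda(\cdot - w)$, translation invariance of Lebesgue measure gives $|\{\phi_{\lambda,w} > t\}| = |\{g_\lambda > t\}|$, so two such translated asymmetric Laplace densities with the same maximum value $M$ have identical distribution functions of their values, i.e. are equimeasurable.

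There is essentially no obstacle here; the only mild care needed is bookkeeping of signs (since $\log((\lambda_1+\lambda_2)t)$ is nonpositive on the relevant range $t \in (0, M]$) and the degenerate one-sided cases $\lambda_1=0$ or $\lambda_2=0$. The ``main step'' is simply recognizing that the two exponential branches are monotone in opposite directions so each contributes one endpoint of the interval, and that both endpoints scale linearly in $\log((\lambda_1+\lambda_2)t)$ with coefficients $-\lambda_1$ and $\lambda_2$.
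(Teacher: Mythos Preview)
Your proposal is correct and follows exactly the approach the paper takes: the paper's own proof simply asserts that ``the computation of the superlevel set is elementary'' and then invokes translation invariance of Lebesgue measure together with $M(X_\lambda) = \tfrac{1}{\lambda_1+\lambda_2}$, which is precisely what you carry out in detail (and you correctly flag the sign slip in the displayed measure formula).
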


\begin{proof}
The computation of the superlevel set is elementary. 
To see equimeasurability, note that 
translation of $g_{\lambda}$ does not change the measure of its superlevel sets 
and that we have fixed the quantity $M(X_{\lambda}) = \frac{1}{\lambda_1 + \lambda_2}$. 
\end{proof}

Equimeasurability is the foundation of the following technical lemma. 
\begin{lemma}\label{lem: asymmetric Laplace suplevel sets}
Fix constants $a,M \ge 0$ and let $X \sim \phi$ be asymmetric Laplace with mean $0$ and $M(X) = M$. 
Then, the quantity $\abs{[-a,a]^c\cap\{\phi > t\}}$ depends only on $\lambda_2$, is minimized by taking $\lambda_2 = \frac{1}{2M}$, and is maximized by taking $\lambda_2 = \frac{1}{M}$. 
\end{lemma}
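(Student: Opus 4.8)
The plan is to combine the explicit superlevel-set formula for asymmetric Laplace densities with the interval-intersection monotonicity recorded in \Cref{lem: translating intervals}. First I would fix notation: write $\phi(x) = g_\lambda(x-w)$ for $g_\lambda$ the mode-centered density of \Cref{def: asymmetric Laplace Lebesgue}. The hypothesis $M(X) = M$ forces $\lambda_1 + \lambda_2 = 1/M$, and by \Cref{lem: asymmetric Laplace Lebesgue moments} the requirement $\mathbb{E}X = 0$ forces $w = \lambda_1 - \lambda_2$. Thus, once $a$ and $M$ are held fixed, $\phi$ — hence also the quantity $|[-a,a]^c \cap \{\phi > t\}|$ — is determined by the single parameter $\lambda_2 \in [0, 1/M]$ (with $\lambda_1 = 1/M - \lambda_2$); this is the first assertion.

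Next I would locate the set $\{\phi > t\}$. One may assume $0 < t < M$ and $a > 0$: if $t \geq M$ then $\{\phi > t\} = \emptyset$, and if $a = 0$ then $|[-a,a]^c \cap \{\phi > t\}| = |\{\phi > t\}|$, so in either degenerate case the quantity does not depend on $\lambda_2$ and the claim is trivial. Put $L \coloneqq \log(M/t) > 0$. By \Cref{lem:sublevel_set_formula}, $\{g_\lambda > t\}$ is an interval of length $L/M$; translating it by $w = \lambda_1 - \lambda_2$ and simplifying with $\lambda_1 + \lambda_2 = 1/M$, a short computation identifies $\{\phi > t\}$ as an interval of the same length $L/M$ whose center is
\[
  c(\lambda_2) \;=\; (2 - L)\Big( \tfrac{1}{2M} - \lambda_2 \Big).
\]
The only properties of $c$ I need are that it is affine in $\lambda_2$ and vanishes exactly at $\lambda_2 = \tfrac{1}{2M}$, so that $|c(\lambda_2)|$, as $\lambda_2$ ranges over $[0, 1/M]$, attains its minimum $0$ at $\lambda_2 = \tfrac{1}{2M}$ and its maximum $\tfrac{|2-L|}{2M}$ at the two endpoints $\lambda_2 \in \{0, 1/M\}$.

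Finally I would apply \Cref{lem: translating intervals} with half-lengths $a$ and $L/(2M)$: since $[-a,a]$ is symmetric about the origin and $\{\phi > t\}$ is an interval of fixed length $L/M$ centered at $c(\lambda_2)$, that lemma gives that $|[-a,a] \cap \{\phi > t\}|$ is a non-increasing function of $|c(\lambda_2)|$, and therefore
\[
  |[-a,a]^c \cap \{\phi > t\}| \;=\; \tfrac{L}{M} - |[-a,a] \cap \{\phi > t\}|
\]
is non-decreasing in $|c(\lambda_2)|$. Together with the previous paragraph this shows the quantity is minimized at $\lambda_2 = \tfrac{1}{2M}$ and maximized at $\lambda_2 = \tfrac{1}{M}$ (equivalently at $\lambda_2 = 0$ — the two endpoints giving the same value because the reflection $x \mapsto -x$ swaps $\lambda_1 \leftrightarrow \lambda_2$ while preserving $[-a,a]^c$, the mean-zero condition, and $M$).

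The argument is purely elementary; the one step where care is needed, and essentially the only source of possible error, is the center formula $c(\lambda_2) = (2-L)(\tfrac{1}{2M} - \lambda_2)$, where one must correctly pass from the mode-centered parametrization of \Cref{def: asymmetric Laplace Lebesgue} to the mean-zero centering imposed here, keeping track of the fact that $\lambda_1$ governs the left tail while $\lambda_2$ governs the right.
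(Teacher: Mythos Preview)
Your proof is correct and follows essentially the same approach as the paper: both arguments reduce to the equimeasurability statement of \Cref{lem:sublevel_set_formula} (so that $\{\phi>t\}$ is an interval of fixed length $L/M$ depending only on $M$ and $t$), then track how this interval translates as $\lambda_2$ varies and finish with \Cref{lem: translating intervals}. The only difference is presentational: the paper parametrizes the interval via its endpoints and splits into the cases $t\lessgtr Me^{-2}$ to identify the extreme translation, whereas you compute the center $c(\lambda_2)=(2-L)\big(\tfrac{1}{2M}-\lambda_2\big)$ directly and read off that $|c(\lambda_2)|$ is minimized at $\lambda_2=\tfrac{1}{2M}$ and maximized at the endpoints --- a slightly cleaner packaging of the same computation.
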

}
\begin{proof}
We may write $X \sim \phi = g_{w + \lambda_2, \lambda_2} (\,\cdot - w)$ for $w = \frac 1 M - 2 \lambda_2$ and $\lambda_2 \in \left[0, \frac 1 M \right]$. 
Indeed, by the formulas for mean and maximum of asymmetric Laplace densities, $w, \lambda_1, \lambda_2$ must satisfy
$\lambda_1 = \lambda_2 + w$ and $\frac 1 M = \lambda_1 + \lambda_2$.
\Cref{lem:sublevel_set_formula} shows that for $t < M$, 
\[
  \{ \phi > t \} 
     = \lambda_2 (R(t)-1) + \left( - \frac {R(t)} M, \frac 1 M \right),   
\]
where we have written $R(t) = \log \left( \frac M t \right) -1$.
Moreover, for $t \leq M e^{-2}$, $R(t) - 1 \geq 0$ and
\[
  \left\{ \frac{1}{M} \right\} 
    = \argmax_{\lambda_2 \in [0,\frac{1}{M}]} \bigg( \sup \{ \phi > t \} \bigg), 
\]
while for $t \geq M e^{-2}$, $R(t) - 1 \leq 0$ and 
\[
  \left\{ \frac{1}{M} \right\} 
    = \argmin_{\lambda_2 \in [0,\frac{1}{M}]} \bigg( \inf \{ \phi > t \} \bigg).
\]
\Cref{lem: translating intervals} now shows that $\lambda_2 = \frac{1}{M}$ (or $\lambda_2 = 0$) maximizes the objective function. 

Also, taking $\lambda_2 = \frac 1 {2M}$ yields the symmetric interval
\[
  \{ \phi > t \}
  = \{ g_{(\frac{1}{2M}.\frac 1{2M})} > t \} 
  = \left( - \frac{R(t) +1 }{2M}, \frac{R(t)+1}{2M} \right).
\]
Another application of \Cref{lem: translating intervals} now 
completes the proof. 
\end{proof}

 \begin{cor} \label{cor: increasing order for asymmetric Laplace}
     For $U$ uniform on an interval, 
     $S$ Laplace, 
     $X$ asymmetric Laplace, 
     and $Z$ or $-Z$ exponential,
     \[
        |U- \mathbb{E}U| \prec_1 |S - \mathbb{E}S| \prec_1 |X - \mathbb{E}X| \prec_1 |Z - \mathbb{E} Z |,
     \]
     when $0 < M = M(U) = M(S) = M(X) = M(Z)$.
 \end{cor}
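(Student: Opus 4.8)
The plan is to prove the chain of $\prec_1$ relations for the absolute-deviation variables by reducing each link to a statement about tail measures $\mathbb{P}(|W - \mathbb{E}W| > s)$, since for a non-negative random variable $V$ one has $V_1 \prec_1 V_2$ if and only if $\mathbb{P}(V_1 > s) \leq \mathbb{P}(V_2 > s)$ for all $s \geq 0$ (the $\prec_1$ order restricted to non-negative variables is precisely first-order stochastic dominance). Writing $s = 0$ is trivial, so fix $s > 0$. By the layer-cake / co-area principle, for a variable $W$ with density $\phi$ and barycenter $b = \mathbb{E}W$ we have $\mathbb{P}(|W-b| > s) = \int_{\{ |x - b| > s\}} \phi(x)\,dx = \int_0^\infty \bigl| \{ x : |x-b| > s \} \cap \{ \phi > t \} \bigr|\, dt$, after recentering so that $b = 0$, $= \int_0^\infty \bigl| [-s,s]^c \cap \{ \phi > t \} \bigr|\, dt$. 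Thus it suffices to compare the integrands $\bigl| [-s,s]^c \cap \{\phi > t\}\bigr|$ pointwise in $t$ across the four densities, all normalized to share the common maximum value $M$.

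The three middle and outer comparisons are then exactly what \Cref{lem: asymmetric Laplace suplevel sets} delivers. First I would normalize: uniform on an interval, Laplace ($\lambda_1 = \lambda_2$), general asymmetric Laplace, and $\pm$-exponential are all special asymmetric Laplace densities (with $\lambda_2 = 0$ or $\lambda_1 = 0$ being the exponential degenerate cases, and the uniform handled separately since it is not literally asymmetric Laplace). For the comparison $|S - \mathbb{E}S| \prec_1 |X - \mathbb{E}X| \prec_1 |Z - \mathbb{E}Z|$ among asymmetric Laplace laws with common max $M$: \Cref{lem: asymmetric Laplace suplevel sets} states that after centering, $\bigl| [-a,a]^c \cap \{\phi > t\}\bigr|$ depends only on $\lambda_2$, is minimized at $\lambda_2 = \tfrac{1}{2M}$ (which is exactly the symmetric Laplace $S$) and maximized at $\lambda_2 = \tfrac 1 M$ (which is the exponential $Z$, or $\lambda_2 = 0$ giving $-Z$). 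Applying this with $a = s$ and integrating over $t \in (0, M)$ gives $\mathbb{P}(|S - \mathbb{E}S| > s) \leq \mathbb{P}(|X - \mathbb{E}X| > s) \leq \mathbb{P}(|Z - \mathbb{E}Z| > s)$ for every $s$, hence the two $\prec_1$ relations. Note the monotonicity in $\lambda_2$ must be read off as: the minimum is interior and the objective is monotone on each side of $\lambda_2 = \tfrac{1}{2M}$ toward the endpoints, so intermediate $\lambda_2$ interpolate — but since $X$ is an \emph{arbitrary} asymmetric Laplace its $\lambda_2 \in [0, \tfrac 1M]$ lies between the two extremes, giving the sandwiched bound directly.

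For the leftmost link $|U - \mathbb{E}U| \prec_1 |S - \mathbb{E}S|$, I would compare the uniform on $[-L, L]$ with $M = \tfrac{1}{2L}$ to the symmetric Laplace with the same max. Both are symmetric about $0$, so $|U - \mathbb{E}U|$ is uniform on $[0, L]$ with $\mathbb{P}(|U - \mathbb{E}U| > s) = (1 - s/L)_+$, while the Laplace $S$ with $M = \tfrac{1}{2L}$ has density $\tfrac{1}{2L}e^{-|x|/L}$... wait, I should double-check normalization: symmetric Laplace $g_{(\lambda,\lambda)}$ has max $\tfrac{1}{2\lambda}$, so $\lambda = L$ and $\mathbb{P}(|S| > s) = e^{-s/L}$. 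Since $e^{-u} \geq 1 - u = (1-u)_+$ for $u \in [0,1]$ and $e^{-u} > 0 = (1-u)_+$ for $u > 1$, we get $\mathbb{P}(|U-\mathbb{E}U| > s) \leq \mathbb{P}(|S - \mathbb{E}S| > s)$ for all $s$, establishing the first $\prec_1$. Assembling the three inequalities chains them into the stated display.

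The main obstacle I anticipate is bookkeeping the normalization and the correspondence between the parameters $(\lambda_1, \lambda_2, w)$ and the four named distributions — in particular confirming that the exponential (one-sided) case really is the $\lambda_2 = \tfrac 1M$ (or $\lambda_2 = 0$) endpoint of \Cref{lem: asymmetric Laplace suplevel sets} and that \Cref{lem: asymmetric Laplace suplevel sets} as stated already supplies the monotone sandwich rather than just the two extremal values; if the lemma only gives the extremes, a short additional argument (e.g. \Cref{lem: translating intervals} applied once more, as in the lemma's own proof) is needed to see that every intermediate $\lambda_2$ yields an intermediate tail measure. The uniform-vs-Laplace step is genuinely elementary and should be only a line. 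There is also a minor subtlety that $\prec_1$ for non-negative variables coincides with stochastic dominance, which I would state as a one-line remark citing the density-crossing perspective or simply the definition via non-decreasing test functions $f$ and the identity $\mathbb{E}f(V) = f(0) + \int_0^\infty f'(s)\,\mathbb{P}(V > s)\,ds$.
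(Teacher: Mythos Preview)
Your argument is correct and matches the paper's: both reduce the asymmetric-Laplace comparisons to \Cref{lem: asymmetric Laplace suplevel sets} via a layer-cake identity, and your worry about whether the lemma delivers the full sandwich is unfounded since the stated minimum at $\lambda_2 = \tfrac{1}{2M}$ and maximum at $\lambda_2 \in \{0,\tfrac{1}{M}\}$ already bound every intermediate $\lambda_2$. The only difference is the leftmost link: the paper uses a one-crossing argument on the densities of the absolute values (the density of $|U-\mathbb{E}U|$ is $2M\,\mathbbm{1}_{[0,1/(2M)]}$, which dominates on its support any density bounded by $2M$), yielding in fact $|U-\mathbb{E}U| \prec_1 |X - \mathbb{E}X|$ for \emph{any} $X$ with $M(X)\le M$, whereas your direct survival-function comparison $(1-2Ms)_+ \le e^{-2Ms}$ handles only the Laplace case.
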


Note that $|U - \mathbb{E}U| \prec_{1} |X - \mathbb{E}X|$ holds for general $X$ with density bounded by $M$.
 \begin{proof}
     For the first inequality, note that if $X$ has density $f$, then $|X|$ has density $x \mapsto (f(x) + f(-x))\mathbbm{1}_{[0,\infty)}(x)$, so that $|U-\mathbb{E}U|$  has the density function $\phi(x) = 2 M \mathbbm{1}_{[0, 1/2M]}(x)$.  For $X$ with density bounded by $M$, $|X|$ has density $(f(x) + f(-x))\mathbbm{1}_{[0,\infty)}(x)$ bounded by $2M$, $\phi = 2M \geq f $ on $[0,\frac{1}{2M}]$, while trivially $\phi \leq f$ on $(\frac{1}{2M}, \infty)$.  Thus by \Cref{thm: Karlin-Novikoff criteria} $|U- \mathbb{E}U| \prec_1 |S - \mathbb{E}S|$ holds.

    For the remaining cases we note that for $\psi$ non-decreasing and $X$ a random variable with density $f$
    \begin{align*}
        \mathbb{E} \psi(|X|) = \int_{\mathbb{R}} \psi(|x|) f(x) dx 
        &= \int_0^\infty \int_0^\infty \left(\int_{\mathbb{R}} 
          \mathbbm{1}_{\{ w: \psi(|w|) > t \}}(x) 
          \mathbbm{1}_{ \{ w : f(w) > s \}}(x) 
          dx \right) dt ds \\
        &= \int_0^\infty \int_0^\infty \left| \{ |w| \in \psi^{-1} (t, \infty) \}\cap \{ f > s \} \right|  dt ds
    \end{align*} 
    Observe that since $\psi$ is non-decreasing $$\big|\{ |w| \in \psi^{-1} (t, \infty) \} \cap \{ f > s \} \big| = \big| ((-\infty, a] \cup [a, \infty) ) \cap \{ f > s  \} \big|$$
    for $a = \inf \{x: \psi(x) > t\}$, and it suffices to prove 
    \[
       |[-a,a]^c \cap \{ \phi_{\Lambda(1/2M)} > t \}| 
       \leq |[-a,a]^c \cap \{ \phi_{\Lambda(\lambda)} > t \}| 
       \leq |[-a,a]^c \cap \{ \phi_{\Lambda(1/M)} > t \}|.
    \]
    This is the content of \Cref{lem: asymmetric Laplace suplevel sets}.
    \end{proof}

Now we give a slightly expanded version of \Cref{thm: orlicz norm} and its proof.

\begin{thm}\label{thm: ordering with fixed maximum}
    For $X$ a random variable with $M \coloneqq M(X) < \infty$, 
    \[
        U \prec_{cx} X - \mathbb{E} X,
    \]
    for a uniform distribution $U \sim f_U(x) \coloneqq M \mathbbm{1}_{[-1/2M, 1/2M]}(x) $.
    Moreover, if $X$ is log-concave,
    \[
        |X - \mathbb{E} X| \prec_{icx} | Z - \mathbb{E} Z |
    \]
    for an exponential distribution $Z \sim f_Z(x) \coloneqq M e^{-M x} \mathbbm{1}_{(0,\infty)}(x).$ 
\end{thm}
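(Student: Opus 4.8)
The plan is to prove the two statements separately, each by exhibiting the appropriate density-crossing structure and invoking the crossing-to-majorization lemmas from Section~\ref{sec: defn}.

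For the first statement, $U \prec_{cx} X - \mathbb{E}X$: after translating so that $\mathbb{E}X = 0$, write $\phi = f_U - f$ where $f$ is the density of $X$. Both are probability densities so $\int \phi\, dx = 0$, and since $U$ is centered, $\int x\, f_U(x)\, dx = 0 = \int x f(x)\, dx$, so $\int x\phi\, dx = 0$ as well. The key observation is that $\phi$ changes sign in the pattern required by Lemma~\ref{lem:two_crossing}: on $[-1/2M, 1/2M]$ we have $f_U = M \geq f$ pointwise (since $M = \|f\|_\infty$), so $\phi \geq 0$ there, while outside this interval $f_U = 0 \leq f$, so $\phi \leq 0$ there — that is, $\phi \geq 0$ on $[x_0,x_1] \coloneqq [-1/2M,1/2M]$ and $\phi \leq 0$ off it. This is exactly the sign pattern of Lemma~\ref{lem:two_crossing} applied to $-\phi$ (equivalently, it is the hypothesis of that lemma with the roles of $g_1, g_2$ matching $X$ being the ``more spread out'' variable), so $\int f\phi\, dx$ is handled and we conclude $\mathbb{E}[f(U)] \leq \mathbb{E}[f(X)]$ for convex $f$, i.e. $U \prec_{cx} X-\mathbb{E}X$. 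Note this argument only uses $M(X) < \infty$, not log-concavity.

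For the second statement, $|X - \mathbb{E}X| \prec_{icx} |Z - \mathbb{E}Z|$ with $Z$ exponential of rate $M$: here I would use Corollary~\ref{cor: increasing order for asymmetric Laplace} as an intermediate step, or reprove its relevant half directly. First translate $X$ so $\mathbb{E}X = 0$ and let $\phi$ be the density of $|X|$, namely $\phi(x) = (f(x) + f(-x))\mathbbm{1}_{[0,\infty)}(x)$; note $\|f\|_\infty = M$ gives $\phi \leq 2M$ on $[0,\infty)$, and $X$ log-concave implies $\phi$ is log-concave on $[0,\infty)$ (sum of log-concave functions sharing a common point of... — actually one should be careful here; the cleaner route is to route through the asymmetric Laplace). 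The strategy from Corollary~\ref{cor: increasing order for asymmetric Laplace} is the natural one: by Proposition~\ref{prop: match a point and convex domination} applied at $t = 0$ with a mode, or more directly by the reduction already established, $X - \mathbb{E}X$ is dominated in the appropriate order by an asymmetric Laplace $X_\lambda$ with $M(X_\lambda) = M$ and mean $0$; then Lemma~\ref{lem: asymmetric Laplace suplevel sets} shows that among all such $X_\lambda$, the extreme one-sided exponential case $\lambda_2 = 1/M$ (i.e. $Z$) maximizes $|[-a,a]^c \cap \{\phi > t\}|$ for every $a, t$; and the layer-cake computation in the proof of Corollary~\ref{cor: increasing order for asymmetric Laplace} converts this into $\mathbb{E}\psi(|X-\mathbb{E}X|) \leq \mathbb{E}\psi(|Z - \mathbb{E}Z|)$ for every non-decreasing $\psi$, in particular every non-decreasing convex $\psi$. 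That gives $|X-\mathbb{E}X| \prec_{icx} |Z - \mathbb{E}Z|$.

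The main obstacle I anticipate is the passage from a general log-concave $X$ to the asymmetric Laplace comparison in the $\prec_{icx}$ statement: Proposition~\ref{prop: match a point and convex domination} delivers $X \prec_{cx} X_\lambda$ for an $X_\lambda$ matching $f$ at a chosen point, but to land exactly on the exponential $Z$ one needs to combine this with the monotonicity in $\lambda_2$ from Lemma~\ref{lem: asymmetric Laplace suplevel sets}, and to check that taking the point $t$ to be a mode (so $f(t) = M$, forcing $M(X_\lambda) = M$) is the right normalization. One must also verify that $\prec_{cx}$ applied to the symmetrized/absolute-value variables, together with the one-sided sup-level-set bound, correctly yields the $\prec_{icx}$ conclusion rather than merely $\prec_{cx}$ — this is exactly why the layer-cake identity in Corollary~\ref{cor: increasing order for asymmetric Laplace} is phrased for non-decreasing $\psi$, so I would lean on that identity directly rather than trying to manipulate convex orders abstractly. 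Everything else (sign patterns, the elementary interval computations via Lemma~\ref{lem: translating intervals}) is routine given the lemmas already in hand.
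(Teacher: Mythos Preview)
Your proposal is correct and matches the paper's proof: the two-crossing argument for $U \prec_{cx} X - \mathbb{E}X$, then Proposition~\ref{prop: match a point and convex domination} (applied at a mode, so that $M(X_\lambda)=M$) followed by Corollary~\ref{cor: increasing order for asymmetric Laplace} for the $\prec_{icx}$ statement. The obstacle you flag dissolves with the single observation the paper makes explicit: for $\psi$ increasing convex with $\psi(0)=0$, the map $x\mapsto \psi(|x-\mu|)$ is itself convex, so $X \prec_{cx} X_\lambda$ immediately yields $\mathbb{E}\psi(|X-\mathbb{E}X|) \le \mathbb{E}\psi(|X_\lambda-\mathbb{E}X_\lambda|)$ without any layer-cake manipulation for that step.
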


In particular, for $\psi$ a Young function defining the Orlicz norm $\| \cdot \|_\psi$,
    \[
        \| U - \mathbb{E}U \|_\psi \leq \| X - \mathbb{E}{X} \|_{\psi} \leq \| Z - \mathbb{E}Z \|_\psi.
    \]

\begin{proof}
For the first inequality, observe that $f_U \geq M \geq f_X$ on $[-\frac 1 {2M}, \frac{1}{2M}]$, and trivially $0 = f_U \leq f_X$ on the complement.  Hence by \Cref{thm: Karlin-Novikoff criteria} we have $U \prec_{cx} X - \mathbb{E}X$.

  For the other inequality, 
  \Cref{prop: match a point and convex domination} shows that 
  given $X$, there exists asymmetric Laplace $X_\lambda$ with the same mean $\mu$ such that $X \prec_{cx} X_\lambda$. 
  Hence for a convex increasing function $\psi:[0,\infty) \to [0,\infty)$ such that $\psi(0) = 0$, $\phi(x) = \psi(|x - \mu|)$ is convex, and
    \[
        \mathbb{E}\phi(X) = \mathbb{E} \psi(|X - \mathbb{E}X|) \leq \mathbb{E} \psi(|X_\lambda - \mathbb{E} X_\lambda|) = \mathbb{E}\phi(X_\lambda).
    \]
    Applying \Cref{cor: increasing order for asymmetric Laplace}, since $\psi$ is increasing we have
    \[
        \mathbb{E} \psi(|X_\lambda - \mathbb{E} X_\lambda|) \leq \mathbb{E} \psi(|Z - \mathbb{E} Z|). \qedhere
    \]
\end{proof}

For a random variable $X$ with finite expectation $\mathbb{E}[X]$ and $p \geq 1$, denote the $p$-th absolute central moment of $X$, $$\sigma_p(X) \coloneqq \mathbb{E}|X - \mathbb{E}[X]|^p.$$  We will also use the following notation for the ``subfactorial'' of an integer $n \geq 1$,
\[
    !n \coloneqq \int_0^\infty (x-1)^n e^{-x} dx.
\]
Combinatorially $!n$ corresponds to the number ``derangements'' of $n$ elements, that is the permutations $\phi$ of $n$ elements such that $\phi(j) \neq j$ for all $j$; see for instance \cite{stanley2011enumerative}. 
\begin{cor} \label{cor: maximums to ACM}
    For $X$ log-concave and $p \geq 1$,
    \[
           {\color{black} \frac{1}{2^p(p+1)} \leq M^p(X) \sigma_p(X) \leq \frac{\Gamma(1+p)}{e} +
            \int_0^1 (1-x)^p e^{-x} dx.}
    \]
    where the left hand inequality holds for any random variable $X$, and with equality when the random variable is uniform on an interval, while the right hand side holds with equality for an exponential distribution.  Moreover for integers $n$ we have
    \[
        M^{n}(X) \sigma_n(X) \leq \frac{n!}{e} + (-1)^n \left( !n - \frac{n!}{e} \right).
    \]
\end{cor}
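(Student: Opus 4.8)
The plan is to deduce Corollary~\ref{cor: maximums to ACM} directly from Theorem~\ref{thm: ordering with fixed maximum} by unwinding what the $\psi$-Orlicz inequalities say when $\psi = \psi_p$. First I would recall that for $\psi_p(x) = x^p$ one has $\|W\|_{\psi_p} = (\mathbb{E}|W|^p)^{1/p} = \sigma_p(W)^{1/p}$ when $W = X - \mathbb{E}X$, so the chain $\|U - \mathbb{E}U\|_{\psi_p} \le \|X - \mathbb{E}X\|_{\psi_p} \le \|Z - \mathbb{E}Z\|_{\psi_p}$ becomes $\sigma_p(U) \le \sigma_p(X) \le \sigma_p(Z)$, where $U$ is uniform on $[-1/2M, 1/2M]$ and $Z$ has density $Me^{-Mx}\mathbbm{1}_{(0,\infty)}$, both with $M = M(X)$. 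Multiplying through by $M^p$ and noting $M^p$ is just a positive scalar, it remains to compute $M^p \sigma_p(U)$ and $M^p \sigma_p(Z)$ explicitly. For the uniform: $\mathbb{E}U = 0$, so $M^p \sigma_p(U) = M^p \int_{-1/2M}^{1/2M} |x|^p M\,dx = 2 M^{p+1} \int_0^{1/2M} x^p\,dx = \frac{2 M^{p+1}}{(p+1)(2M)^{p+1}} = \frac{1}{2^p(p+1)}$, giving the left-hand bound (and since the first inequality in Theorem~\ref{thm: ordering with fixed maximum} needs no log-concavity, this holds for general $X$, with equality iff $X$ is uniform). For the exponential: $\mathbb{E}Z = 1/M$, so $M^p \sigma_p(Z) = M^p \int_0^\infty |x - 1/M|^p M e^{-Mx}\,dx$; substituting $u = Mx$ turns this into $\int_0^\infty |u - 1|^p e^{-u}\,du = \int_0^1 (1-u)^p e^{-u}\,du + \int_1^\infty (u-1)^p e^{-u}\,du$.

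Next I would evaluate the tail integral $\int_1^\infty (u-1)^p e^{-u}\,du$. Substituting $v = u - 1$ gives $e^{-1}\int_0^\infty v^p e^{-v}\,dv = \Gamma(1+p)/e$. Hence $M^p \sigma_p(Z) = \frac{\Gamma(1+p)}{e} + \int_0^1 (1-u)^p e^{-u}\,du$, which is exactly the claimed right-hand bound; the equality case for exponential is immediate from the corresponding equality discussion in Corollary~\ref{cor: increasing order for asymmetric Laplace} and Theorem~\ref{thm: ordering with fixed maximum}. For the final sentence about integer exponents $n$, I would match this against the definition $!n = \int_0^\infty (x-1)^n e^{-x}\,dx$ given in the text. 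Split that defining integral the same way: $!n = \int_0^1 (x-1)^n e^{-x}\,dx + \int_1^\infty (x-1)^n e^{-x}\,dx = (-1)^n \int_0^1 (1-x)^n e^{-x}\,dx + \frac{n!}{e}$. Solving, $\int_0^1 (1-x)^n e^{-x}\,dx = (-1)^n\big(!n - \frac{n!}{e}\big)$, so $M^n \sigma_n(Z) = \frac{n!}{e} + (-1)^n\big(!n - \frac{n!}{e}\big)$, which is the stated formula since $\Gamma(1+n) = n!$.

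The computations here are all elementary, so there is no serious analytic obstacle; the only thing requiring a little care is bookkeeping the sign and direction of the substitution in the derangement identity, namely keeping straight that on $[0,1]$ we have $(x-1)^n = (-1)^n(1-x)^n$, which is where the $(-1)^n$ factor enters. I would also remark explicitly that the left inequality requires only a density bound $M$ and not log-concavity (as already noted after Theorem~\ref{thm: ordering with fixed maximum} and in Corollary~\ref{cor: increasing order for asymmetric Laplace}), so that the statement ``the left hand inequality holds for any random variable $X$'' is justified, while the upper bound genuinely uses log-concavity through Proposition~\ref{prop: match a point and convex domination}.

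\begin{proof}
By Theorem~\ref{thm: ordering with fixed maximum} applied with the Young function $\psi_p(x) = x^p$ (for which $\|W\|_{\psi_p} = \sigma_p(W)^{1/p}$), we obtain
\[
    \sigma_p(U) \leq \sigma_p(X) \leq \sigma_p(Z),
\]
where $U$ is uniform on $\left[ -\frac{1}{2M}, \frac{1}{2M} \right]$ and $Z \sim M e^{-Mx} \mathbbm{1}_{(0,\infty)}(x)$, with $M = M(X)$; the first inequality requires only that $X$ have density bounded by $M$, while the second uses log-concavity. Since $\mathbb{E}U = 0$,
\[
    M^p \sigma_p(U) = 2 M^{p+1} \int_0^{1/2M} x^p\, dx = \frac{2 M^{p+1}}{(p+1)(2M)^{p+1}} = \frac{1}{2^p(p+1)}.
\]
Since $\mathbb{E}Z = \frac 1 M$, the substitution $u = Mx$ gives
\[
    M^p \sigma_p(Z) = \int_0^\infty |u-1|^p e^{-u}\, du = \int_0^1 (1-u)^p e^{-u}\, du + \int_1^\infty (u-1)^p e^{-u}\, du,
\]
and the substitution $v = u-1$ in the last integral gives $\int_1^\infty (u-1)^p e^{-u}\, du = e^{-1} \int_0^\infty v^p e^{-v}\, dv = \frac{\Gamma(1+p)}{e}$. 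Multiplying the displayed ordering by $M^p$ yields the first claim, with the stated equality cases. For integer $n$, write $\Gamma(1+n) = n!$ and split the defining integral for $!n$ in the same way:
\[
    !n = \int_0^1 (x-1)^n e^{-x}\, dx + \int_1^\infty (x-1)^n e^{-x}\, dx = (-1)^n \int_0^1 (1-x)^n e^{-x}\, dx + \frac{n!}{e},
\]
so $\int_0^1 (1-x)^n e^{-x}\, dx = (-1)^n \left( !n - \frac{n!}{e} \right)$, and the bound $M^n \sigma_n(X) \leq M^n \sigma_n(Z) = \frac{n!}{e} + (-1)^n \left( !n - \frac{n!}{e} \right)$ follows.
\end{proof}
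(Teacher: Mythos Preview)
Your proof is correct and follows essentially the same approach as the paper's: both apply Theorem~\ref{thm: ordering with fixed maximum} with $\psi_p(x)=x^p$, compute the extremal moments for the uniform and exponential, and for integer $n$ split the defining integral of $!n$ at $x=1$. The only cosmetic differences are that the paper first normalizes to $M(X)=1$ by affine invariance and invokes the orderings $\prec_{cx}$ and $\prec_{icx}$ directly, whereas you carry a general $M$ through the computation and phrase things via the Orlicz-norm consequence.
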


Note that for even integers $p = 2n$, one has the inequality 
\[
    M^{2n}(X) \sigma_{2n}(X) \leq \  !(2n).
\]
\begin{proof}
    $M^p(X)\sigma_p(X)$ is affine invariant, so it suffices to prove the inequalities when $M(X) =1$, in which case the left hand inequality follows from Theorem \ref{thm: ordering with fixed maximum} applied to the convex function $\phi_p(x) = |x|^p$ and the ordering $U \prec_{cx} X - \mathbb{E}X$, where $U \sim f_U(x) = \mathbbm{1}_{[-1/2,1/2]}(x)$ so that
    \[
        \frac{1}{2^p(p+1)} = \mathbb{E}|U|^p \leq \mathbb{E}|X - \mathbb{E}[X]|^p = M^p(X) \sigma_p(X).
    \]
    With $f_Z(x) = e^{-x}\mathbbm{1}_{[0,\infty)}(x)$, and the increasing convex function $\Phi_p:[0,\infty) \to [0,\infty)$, $\Phi_p(x) = x^p$ applying the other majorization result of Theorem \ref{thm: ordering with fixed maximum}, $|X - \mathbb{E}[X]| \prec_{icx} |Z - \mathbbm{E}[Z]|$, that
    \begin{align*}
        M^p(X) \sigma_p(X)
            \leq 
                \int_0^\infty |x-1|^p e^{-x} dx = \int_1^\infty (x-1)^p e^{-x} dx + \int_0^1 (1-x)^p e^{-x} dx,
    \end{align*}
    recovering the upper bound after the subsitution $u = x-1$ in the first integral.  For $n$ an integer,
    \begin{align*}
                \int_0^1 (1-x)^n e^{-x} dx
            &=
                (-1)^n \left( \int_0^\infty (x-1)^n e^{-x} dx - \int_1^\infty (x-1)^n e^{-x} dx \right) 
                    \\
            &=
                (-1)^n \left( !n - \frac{n!}{e} \right) . \qedhere
    \end{align*}
\end{proof}
For example when $p =2$, as there is exactly $1$ derangement of a set of two elements, we recover
\[
    \frac 1 {12} \leq  M^2(X) \Var(X) \leq 1, 
\]
of Bobkov and Chistyakov \cite[Proposition 2.1]{BC15:2}, which was used to obtain the  super-additivity properties of the L\'evy concentration function of sums of independent random variables (see Theorem 1.2 of the same paper).

\section{The Counting Measure} \label{sec: discrete log-concave}
In this section our reference measure is the counting measure on $\mathbb{Z}$, and a ``log-concave'' random variable in this section will always mean a random variable with a log-concave density function with respect to the counting measure on $\mathbb{Z}$.
\begin{definition}
    A mode-centered asymmetric Laplace distribution on $\mathbb{Z}$ is a probability distribution of the form
    \[
        g_{p,q}(n) = C_{p,q}  \begin{cases} 
                            p^{|n|} &\hbox{ for integers } n \leq 0,
                                \\
                            q^n &\hbox{ for integers } n \geq 0
                        \end{cases}
    \]
    for $p,q \in [0,1)$, with $C_{p,q} = \frac{(1-p)(1-q)}{1-pq}$.  
\end{definition}

In this section we will only consider integer valued random variables, and for brevity we will drop the suffix, ``on $\mathbb{Z}$''.  As such We consider a random variable $Z \sim \phi$ to be asymmetric Laplace if $\phi(n) = g_{p,q}(n - m)$  for some $m \in \mathbb{Z}$ and $g_{p,q}$ a mode-centered asymmetric Laplace probability sequence.  We take the case $p = q = 0$ to correspond to a point mass at $0$.  For $p = 0$ and $q > 0$ we recover the geometric distribution and with $p > 0$ and $q = 0$ the reflection of a geometric distribution, both as mode-centered asymmetric Laplace distributions.

{
\begin{lem}
    For $X \coloneqq X_{p,q} \sim g_{p,q}$,
    \[
        \mathbb{E}[ e^{tX}] = \frac{(1-p)(1-q)}{(e^t - p)(e^{-t} -q)}
    \]
    for $|t|$ sufficiently small. 
    In particular,
    \begin{align*}
        \mathbb{E}[X] &= \frac{q-p}{(1-q)(1-p)}, \\
        \Var[X] &= \frac{p}{(1-p)^2} + \frac{q}{(1-q)^2}.
    \end{align*}
\end{lem}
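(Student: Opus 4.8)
The plan is to compute the moment generating function of $X \coloneqq X_{p,q} \sim g_{p,q}$ directly from the definition, splitting the sum over $n \le 0$ and $n \ge 0$ as two geometric series, and then extract the mean and variance by Taylor expansion at $t = 0$. First I would write
\[
    \mathbb{E}[e^{tX}] = C_{p,q} \left( \sum_{n \le 0} e^{tn} p^{|n|} + \sum_{n \ge 0} e^{tn} q^{n} \right) - C_{p,q},
\]
subtracting the overcounted $n=0$ term. Reindexing the first sum with $k = -n \ge 0$ gives $\sum_{k \ge 0} (e^{-t} p)^k = \frac{1}{1 - p e^{-t}}$, valid for $|t|$ small enough that $p e^{-t} < 1$, and the second sum is $\sum_{n \ge 0} (e^t q)^n = \frac{1}{1 - q e^{t}}$, valid for $q e^t < 1$. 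Combining over a common denominator and simplifying, using $C_{p,q} = \frac{(1-p)(1-q)}{1-pq}$, should collapse to $\frac{(1-p)(1-q)}{(e^t - p)(e^{-t} - q)}$; the routine algebra is to verify $\frac{1}{1-pe^{-t}} + \frac{1}{1-qe^{t}} - 1 = \frac{1 - pq}{(1 - p e^{-t})(1 - q e^{t})}$ and then multiply numerator and denominator through by $e^t \cdot e^{-t}$-adjusting factors to match the stated form, i.e. $(1 - p e^{-t})(1 - q e^t) = e^{-t} e^{t}(\cdots)$; more cleanly, write $1 - p e^{-t} = e^{-t}(e^t - p)$ and $1 - q e^t = e^t(e^{-t} - q)$ so the product is exactly $(e^t - p)(e^{-t} - q)$ after the $e^{-t} e^t = 1$ cancellation.

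For the mean and variance I would expand $\log \mathbb{E}[e^{tX}]$ as a cumulant generating function: $\log \mathbb{E}[e^{tX}] = -\log(e^t - p) - \log(e^{-t} - q) + \text{const}$, so $\kappa_1 = \mathbb{E}[X]$ and $\kappa_2 = \Var[X]$ come from differentiating once and twice at $t = 0$. Writing $A(t) = -\log(e^t - p)$, we get $A'(t) = -\frac{e^t}{e^t - p}$ so $A'(0) = -\frac{1}{1-p}$, and $A''(t) = -\frac{e^t(e^t-p) - e^{2t}}{(e^t-p)^2} = \frac{p e^t}{(e^t - p)^2}$ so $A''(0) = \frac{p}{(1-p)^2}$. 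Symmetrically, $B(t) = -\log(e^{-t} - q)$ has $B'(0) = \frac{1}{1-q}$ and $B''(0) = \frac{q}{(1-q)^2}$. Adding gives $\mathbb{E}[X] = \frac{1}{1-q} - \frac{1}{1-p} = \frac{q - p}{(1-p)(1-q)}$ and $\Var[X] = \frac{p}{(1-p)^2} + \frac{q}{(1-q)^2}$, as claimed.

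There is no real obstacle here — the lemma is a direct computation, exactly as the analogous continuous Lemma~\ref{lem: asymmetric Laplace Lebesgue moments} was dispatched with ``Direct computation.'' The only point requiring a word of care is the domain of validity: the geometric series converge precisely when $p e^{-t} < 1$ and $q e^t < 1$, which holds for all $t$ in a neighborhood of $0$ since $p, q \in [0,1)$, justifying the phrase ``for $|t|$ sufficiently small'' and the term-by-term differentiation used to extract moments. The degenerate cases $p = 0$ or $q = 0$ (geometric or reflected geometric) are covered uniformly by the same formulas, and $p = q = 0$ gives $\mathbb{E}[e^{tX}] = 1$, consistent with the point mass at $0$. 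I would present this as a short proof: establish the MGF in one displayed computation, then differentiate.

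\begin{proof}
By definition,
\[
    \mathbb{E}[e^{tX}]
        = C_{p,q}\left( \sum_{n \le 0} (p e^{-t})^{|n|} + \sum_{n \ge 1} (q e^{t})^{n} \right)
        = C_{p,q}\left( \frac{1}{1 - p e^{-t}} + \frac{q e^t}{1 - q e^t} \right),
\]
where the geometric series converge for all $t$ with $p e^{-t} < 1$ and $q e^t < 1$, hence for $|t|$ sufficiently small since $p, q \in [0,1)$. Combining over a common denominator and using $1 - p e^{-t} = e^{-t}(e^t - p)$ and $1 - q e^t = e^t(e^{-t} - q)$, the parenthetical expression equals $\frac{1 - pq}{(1 - p e^{-t})(1 - q e^t)} = \frac{1 - pq}{(e^t - p)(e^{-t} - q)}$, so with $C_{p,q} = \frac{(1-p)(1-q)}{1 - pq}$,
\[
    \mathbb{E}[e^{tX}] = \frac{(1-p)(1-q)}{(e^t - p)(e^{-t} - q)}.
\]
Writing $K(t) = \log \mathbb{E}[e^{tX}] = \log\big((1-p)(1-q)\big) - \log(e^t - p) - \log(e^{-t} - q)$, we have $K'(t) = -\frac{e^t}{e^t - p} + \frac{e^{-t}}{e^{-t} - q}$ and $K''(t) = \frac{p e^t}{(e^t - p)^2} + \frac{q e^{-t}}{(e^{-t} - q)^2}$. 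Evaluating at $t = 0$ gives
\[
    \mathbb{E}[X] = K'(0) = \frac{1}{1 - q} - \frac{1}{1 - p} = \frac{q - p}{(1-q)(1-p)},
\]
\[
    \Var[X] = K''(0) = \frac{p}{(1-p)^2} + \frac{q}{(1-q)^2}. \qedhere
\]
\end{proof}
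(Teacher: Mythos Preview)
Your proof is correct and is precisely the direct computation the paper alludes to; the paper's own proof reads in full ``Direct computation.'' Your version simply fills in the details of summing the two geometric series and differentiating the cumulant generating function, with nothing to add or correct.
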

}
\begin{proof}
    Direct computation.
\end{proof}

\begin{lem} \label{lem: asymmetric Laplace majorizer in discrete}
    For a log-concave random variable $Y \sim g$, there exists $X$ with mode-centered asymmetric Laplace probability sequence $g_{p,q}$ such that, $g_{p,q}(0) = g(0)$ and
    \[
        Y \prec_{cx} X.
    \]
    Moreover, if $\Var(Y) = \Var(X)$ then $Y \sim g_{p,q}$ as well.
\end{lem}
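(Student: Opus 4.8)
\textbf{Proof proposal for Lemma~\ref{lem: asymmetric Laplace majorizer in discrete}.}
The plan is to mimic the continuous argument in \Cref{prop: match a point and convex domination}, replacing densities by probability sequences and integrals by sums, and replacing \Cref{lem:one_crossing} and \Cref{lem:two_crossing} by their discrete counterparts (which hold verbatim with $\mu$ the counting measure on $\mathbb{Z}$). By translation invariance we may assume $0$ is a mode of $Y$ and write $Y \sim g$. First I would parametrize the candidate extremizers: for $p \in [0,1)$ the choice $q = q(p)$ forced by $g_{p,q}(0) = C_{p,q} = g(0)$, i.e. $(1-p)(1-q) = g(0)(1-pq)$, determines a one-parameter family; one should check that as $p$ ranges over $[0, 1-g(0)]$ the corresponding $q(p)$ ranges continuously and monotonically over $[1-g(0), 0]$, with the endpoints $p = 1 - g(0), q = 0$ giving a (reflected) geometric supported on $\{n \le 0\}$ and $p = 0, q = 1-g(0)$ giving a geometric supported on $\{n \ge 0\}$. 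The mean $\Phi(p) \coloneqq \mathbb{E}[X_{p,q(p)}] = \frac{q(p)-p}{(1-q(p))(1-p)}$ is then a continuous function of $p$.

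Next I would establish the two-sided comparison at the endpoints. At $p = 1-g(0)$ (so $q = 0$) the sequence $g_{p,0}$ vanishes on the positive integers, while $g$ need not; hence $g - g_{p,0}$ is $\le 0$ on $\{n \le 0\}$ up to a single sign change and $\ge 0$ on $\{n > 0\}$... more carefully, one checks $\{g_{p,0} \le g\}$ contains all positive integers and is an interval, which by the discrete analog of \Cref{lem:one_crossing} gives $Y \prec_1 X_{p,0}$, hence $\mathbb{E}[Y] \le \Phi(1-g(0))$. Symmetrically at $p = 0$ one gets $X_{0,q} \prec_1 Y$, hence $\Phi(0) \le \mathbb{E}[Y]$. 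By the intermediate value theorem there is $p_\ast$ with $\Phi(p_\ast) = \mathbb{E}[Y]$. Then I would argue, exactly as in the continuous proof, that $\{g_{p_\ast, q(p_\ast)} \le g\}$ is an interval: taking logarithms, the discrete functions $n \mapsto \log g_{p_\ast,q}(n) - \log g(n)$ restricted to $\{n \ge 0\}$ and to $\{n \le 0\}$ are each convex (difference of a discretely affine and a discretely concave sequence), so each sublevel set is an interval containing $0$ (the discrete \Cref{lem: convex functions are not rascals}), and their union across $0$ is an interval. So $g_{p_\ast,q}$ and $g$ cross at most twice. A single crossing is impossible once the means agree, by the strict part of the discrete \Cref{lem:one_crossing} applied with the strictly increasing $f(n) = n$; zero crossings forces $g_{p_\ast,q} = g$; and in the remaining two-crossing case the discrete \Cref{lem:two_crossing} yields $Y \prec_{cx} X_{p_\ast,q}$. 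The equality statement follows the same way: if $\Var(Y) = \Var(X)$, apply the strict part of the discrete \Cref{lem:two_crossing} with the strictly convex $f(n) = n^2$ (noting $\mathbb{E}[Y] = \mathbb{E}[X]$ already) to conclude $g = g_{p_\ast,q}$ on $\mathbb{Z}$.

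The main obstacle I anticipate is the bookkeeping around the degenerate and boundary cases rather than any deep difficulty: one must handle $g(0) = 1$ (point mass, $p = q = 0$, everything trivial); verify that the crossing structure described above is not disrupted by the discreteness (in particular that "sublevel set of a convex sequence is an interval of integers" is the right statement and that a sequence vanishing on a half-line really does cross $g$ at most once on that half-line); and make sure that the discrete versions of \Cref{lem:one_crossing}, \Cref{lem:two_crossing} apply to sign-changing sequences supported on $\mathbb{Z}$, which they do since those lemmas are stated for general $\phi \in L^1(\mathbb{R},\mu)$ with $\mu$ the counting measure. None of these steps requires new ideas beyond the continuous case, so the proof should be a faithful transcription with the integrals replaced by sums.
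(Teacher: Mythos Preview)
Your proposal is correct and is exactly the approach the paper takes: the paper's own proof simply says the existence part ``follows as in the proof of Proposition~\ref{prop: match a point and convex domination}'' and then derives the equality case from the strict convexity of $n\mapsto n^2$ via Lemma~\ref{lem:two_crossing}, which is precisely your plan. One minor bookkeeping slip: you have the stochastic orderings at the two endpoints reversed (at $p=1-g(0)$, $q=0$ one actually obtains $X_{p,0}\prec_1 Y$, and at $p=0$ one obtains $Y\prec_1 X_{0,q}$, consistent with $\Phi(1-g(0))\le 0\le \Phi(0)$), but the intermediate value argument is of course unaffected.
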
 
\begin{proof}
The proof of the existence of such an $X$ follows as in the proof of \Cref{prop: match a point and convex domination}.  
Now we prove that $Y \sim g_{p,q}$ if $\Var(Y) = \Var(X)$.  
Note that since $Y \prec_{cx} X$ implies $\mathbb{E}Y = \mathbb{E}X$, $\Var(Y) = \Var(X)$ is equivalent to $\mathbb{E} Y^2 = \mathbb{E} X^2. $ 
Moreover as in \Cref{prop: match a point and convex domination},  $g$ is identical to $g_{p,q}$ or there exists $x_1 > x_2$ such that $\phi \coloneqq g_{p,q} - g$ is such that $ \phi(x) < 0$ for $x \in (x_2, x_1)$ and $\phi(x) \geq 0$ for $x \in [x_2, x_1]^c$. 
This gives $\int f \phi d \mu \geq 0$ for $f$ convex, and since we have in particular $\int x^2 \phi(x) d\mu(x) = 0$, 
it follows that $Y$ and $X$ have the same distribution.
\end{proof}
Note $g_{p,q}(0) = g(0) = \frac{(1-p)(1-q)}{1 - pq}$ while $\mu \coloneqq \mathbb{E}[Y] = \mathbb{E}[X] = \frac{q-p}{(1-q)(1-p)}$, which can be solved for $p$ and $q$ as 
\[
    p 
    = \frac{1 - y (\mu+1)}{1 - g(0) (\mu -1)},
        \qquad
    q 
    = \frac{1 + y(\mu-1)}{1 + g(0) (\mu + 1)},
\]
thus we have
\[
    \Var[Y] \leq \Var[X] = \frac{1}{2} \left( \left(\frac 1 {g(0)^2} - 1\right) + \mu^2 \right).
\]
Note that by \Cref{lem: asymmetric Laplace majorizer in discrete} equality in the above equation implies that $Y$ has an asymmetric Laplace distribution.
Collecting the above we have \Cref{thm: discrete variance point inequality}, which we restate for convenience below.

\begin{thm}
    For $Y$ a log-concave random variable on $\mathbb{Z}$, and $n \in \mathbb{Z}$,
    \[
       2\  \Var[Y] \leq \left( \frac{1}{\mathbb{P}^2(Y=n)} - 1 \right) + (\mu - n)^2 .
    \]
    with equality if and only if $Y$ has an asymmetric Laplace distribution with mode $n$.
\end{thm}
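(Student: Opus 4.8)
The plan is to deduce \Cref{thm: discrete variance point inequality} directly from \Cref{lem: asymmetric Laplace majorizer in discrete} together with the moment formulas for the discrete asymmetric Laplace, exactly mirroring the Lebesgue-measure proof of \Cref{thm: variance function and distance mean to mode}. First I would invoke translation invariance: the inequality and the claimed equality condition are unchanged under replacing $Y$ by $Y - n$, so it suffices to treat the case $n = 0$, and I may assume $\mathbb{P}(Y = 0) = g(0) > 0$ since otherwise the right-hand side is $+\infty$ and there is nothing to prove. Write $\mu = \mathbb{E}[Y]$.

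Next I would apply \Cref{lem: asymmetric Laplace majorizer in discrete} to obtain a mode-centered discrete asymmetric Laplace $X \sim g_{p,q}$ with $g_{p,q}(0) = g(0)$ and $Y \prec_{cx} X$. Since $x \mapsto x$ and $x \mapsto -x$ are both convex, $Y \prec_{cx} X$ forces $\mathbb{E}[Y] = \mathbb{E}[X] = \mu$, and since $x \mapsto x^2$ is convex it gives $\Var[Y] \le \Var[X]$. The parameters $p, q$ are pinned down by the two equations $g(0) = \frac{(1-p)(1-q)}{1-pq}$ and $\mu = \frac{q-p}{(1-p)(1-q)}$; solving (as already recorded in the excerpt, modulo the typo $y$ for $g(0)$) and substituting into $\Var[X] = \frac{p}{(1-p)^2} + \frac{q}{(1-q)^2}$ should yield, after routine algebra, the clean identity
\[
    \Var[X] = \tfrac12\left( \tfrac{1}{g(0)^2} - 1 + \mu^2 \right),
\]
which is precisely the content of the displayed equation just before the restated theorem. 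Combining $\Var[Y] \le \Var[X]$ with this identity gives $2\Var[Y] \le \frac{1}{\mathbb{P}^2(Y=0)} - 1 + \mu^2$, and undoing the translation restores the general $n$.

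For the equality case, the ``if'' direction is the above identity read as an equality when $Y = X \sim g_{p,q}$ with mode $0$. For ``only if'', equality in $2\Var[Y] = \frac{1}{g(0)^2} - 1 + \mu^2$ together with the identity for $\Var[X]$ forces $\Var[Y] = \Var[X]$, hence $\mathbb{E}[Y^2] = \mathbb{E}[X^2]$ since the means already agree; but $x \mapsto x^2$ is strictly convex, so the moreover-clause of \Cref{lem:two_crossing} (invoked through the proof of \Cref{lem: asymmetric Laplace majorizer in discrete}, which exhibits $\phi = g_{p,q} - g$ as a two-crossing difference) forces $g = g_{p,q}$ pointwise, i.e. $Y$ is the asymmetric Laplace $X$ with mode $0$; translating back, $Y$ has an asymmetric Laplace distribution with mode $n$.

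The one genuinely non-routine point — and the place where I would be most careful — is the algebraic verification that the parameters solving the two constraint equations produce exactly $\Var[X] = \tfrac12(g(0)^{-2} - 1 + \mu^2)$; the formulas for $p$ and $q$ in the excerpt contain a typo and one should double-check signs and the $pq$-denominator, ideally by an alternative route such as using the moment generating function $\mathbb{E}[e^{tX}] = \frac{(1-p)(1-q)}{(e^t-p)(e^{-t}-q)}$ and reading off the first two moments, so as to avoid a brittle direct substitution. A secondary subtlety is making sure the degenerate parameter values ($p = 0$ or $q = 0$, giving a one-sided geometric, or $p = q = 0$ giving a point mass at $0$) are handled uniformly by the same formulas and are covered by the existence statement of \Cref{lem: asymmetric Laplace majorizer in discrete}; these are exactly the cases where the intermediate-value argument underlying that lemma terminates at an endpoint, and they satisfy the equality identity as limits.
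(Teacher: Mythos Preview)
Your proposal is correct and follows essentially the same route as the paper: reduce by translation to $n=0$, apply \Cref{lem: asymmetric Laplace majorizer in discrete} to obtain an asymmetric Laplace $X\sim g_{p,q}$ with $g_{p,q}(0)=g(0)$ and $Y\prec_{cx}X$, use the matched means to identify $p,q$, compute $\Var[X]=\tfrac12(g(0)^{-2}-1+\mu^2)$, and conclude; the equality case via $\Var[Y]=\Var[X]$ and the moreover clause of \Cref{lem: asymmetric Laplace majorizer in discrete} (ultimately \Cref{lem:two_crossing} with the strictly convex $x\mapsto x^2$) is exactly how the paper handles it. Your cautionary remarks about the algebra and the degenerate endpoints are appropriate but do not change the argument.
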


Alternatively, one can consider the problem of maximizing the variance under a fixed maximum value $M(Y) = M$, without any other constraint.  
To this end we may without loss of generality assume that $Y$ has $0$ as a mode, and that $\mathbb{E}Y \le 0$.  
By \Cref{lem: asymmetric Laplace majorizer in discrete}, any maximizer of the variance is necessarily a asymmetric Laplace distribution $X$.  
Thus we consider $X$ asymmetric Laplace with mode at $0$  and $q \leq p$.  
Note that $p = q$ for fixed maximum $M$ forces 
$q = \frac{1 -M}{1 + M}$.  Thus we wish to maximize $\Var[X_{p,q}]$ over $q \in [0,\frac{1-M}{1+M}]$ when $p = \frac{ 1 - q - M}{1 - q - Mq}$.  
After algebra,
\[
    \Var[X_{p,q}] 
    = \frac{1}{M^2} - \frac{(1+q)}{M(1-q)} + \frac{2q}{(1-q)^2},
\]
so it suffices to prove
\[
    \frac{d}{dq} \left(- \frac{(1+q)}{M(1-q)} + \frac{2q}{(1-q)^2}\right) 
    = \left(\frac{2}{M(1-q)^3}\right)\big(-(1-M) + (1 + M)q\big)
    < 0
\]
for $0 < q < \frac{1-M}{1+M}$, 
which is immediately verified. 
Taking $q = 0$ gives a the unique maximizer of the variance for log-concave variables with fixed maximum $M$, mode at $0$, and non-positive mean to be the (reflected) geometric distribution.  
Summarizing, for a log-concave random variable with fixed maximum, the geometric distribution, up to translation and reflection, is the unique maximizer of variance. 
Noting that for a geometric distribution $Z$ we have 
\[
    \Var[Z] = \frac{1- M(Z)}{M^2(Z)},
\]
we have thus recovered half of \Cref{thm: discrete piotr theorem}, which we restate below.
\begin{thm} \label{thm: recovering piotrs}
    For $X$ a log-concave random variable on $\mathbb{Z}$,
    \[
        M^2(X) \Var(X) + M(X) \leq 1,
    \]
    with equality if and only if $X$ is, up to translation and reflection, a geometric random variable.
\end{thm}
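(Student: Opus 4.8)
The plan is to follow the reduction-to-extremal-distributions strategy already employed in the Lebesgue setting, specializing to the second moment. By the affine invariance of the quantity $M^2(X)\Var(X) + M(X)$ under translation and reflection, I would first normalize so that $X$ has a mode at $0$ and $\mathbb{E}[X] \le 0$; set $M \coloneqq M(X) = \mathbb{P}(X=0)$. Applying \Cref{lem: asymmetric Laplace majorizer in discrete}, there is a mode-centered discrete asymmetric Laplace $X_{p,q}$ with $g_{p,q}(0) = g(0) = M$ and $X \prec_{cx} X_{p,q}$; in particular $\Var(X) \le \Var(X_{p,q})$, with equality only if $X$ is itself asymmetric Laplace. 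So it suffices to maximize $\Var(X_{p,q})$ over the one-parameter family of discrete asymmetric Laplace distributions with fixed maximum $M$ and $q \le p$ (the assumption $\mathbb{E}[X] \le 0$).

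Next I would parametrize this family. The constraint $g_{p,q}(0) = \frac{(1-p)(1-q)}{1-pq} = M$ can be solved for $p$ in terms of $q$, giving $p = \frac{1 - q - M}{1 - q - Mq}$, with $q$ ranging over $[0, \frac{1-M}{1+M}]$ (the upper endpoint being the symmetric case $p = q$). Substituting into $\Var(X_{p,q}) = \frac{p}{(1-p)^2} + \frac{q}{(1-q)^2}$ and simplifying, one obtains the expression
\[
    \Var[X_{p,q}] = \frac{1}{M^2} - \frac{1+q}{M(1-q)} + \frac{2q}{(1-q)^2},
\]
exactly as displayed in the excerpt preceding the statement. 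Differentiating the $q$-dependent part in $q$ yields
\[
    \frac{d}{dq}\left(-\frac{1+q}{M(1-q)} + \frac{2q}{(1-q)^2}\right) = \frac{2}{M(1-q)^3}\bigl(-(1-M) + (1+M)q\bigr),
\]
which is strictly negative for $0 < q < \frac{1-M}{1+M}$. Hence the maximum over the admissible range is attained uniquely at $q = 0$, which corresponds to (a reflection of) the geometric distribution with maximum $M$.

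Finally I would assemble the conclusion: the maximizing distribution $Z$ is geometric, for which $\Var[Z] = \frac{1-M(Z)}{M^2(Z)}$, so $M^2(Z)\Var(Z) + M(Z) = 1$; for any other log-concave $X$ the chain $\Var(X) \le \Var(X_{p,q}) \le \Var(Z)$ gives $M^2(X)\Var(X) + M(X) \le 1$. For the equality characterization, equality forces $X \sim X_{p,q}$ (by the rigidity clause of \Cref{lem: asymmetric Laplace majorizer in discrete}, since equal variance plus equal mean means equal second moment, which makes the twice-crossing argument collapse) and then forces $q = 0$ by the strict monotonicity just established, so $X$ is geometric up to translation and reflection. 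I do not expect any single step to be a serious obstacle: the conceptual work is entirely carried by \Cref{lem: asymmetric Laplace majorizer in discrete}, and the remainder is the one-variable optimization. The only place demanding care is bookkeeping the normalization (mode at $0$, sign of the mean) so that restricting to $q \le p$ is genuinely without loss of generality, and verifying the algebraic identity for $\Var[X_{p,q}]$ after the substitution $p = p(q)$, which is routine but must be done cleanly to see the sign of the derivative.
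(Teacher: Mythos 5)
Your proposal is correct and follows exactly the same route as the paper: normalize to mode at $0$ with $\mathbb{E}[X] \le 0$, reduce to the discrete asymmetric Laplace family via \Cref{lem: asymmetric Laplace majorizer in discrete}, parametrize the family with fixed maximum $M$ by $q \in [0, \frac{1-M}{1+M}]$ with $p = \frac{1-q-M}{1-q-Mq}$, and observe the variance is strictly decreasing in $q$, so the maximum is at $q=0$ (geometric). The equality characterization via the rigidity clause of the lemma combined with the strict monotonicity in $q$ also matches the paper's argument.
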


This approach does work in more generality, however the computations become more difficult.  For example we obtain the following which constitutes the second half of \Cref{thm: discrete piotr theorem}.

{\color{black}\begin{theorem}
    For $X$ a log-concave random variable on $\mathbb{Z}$,
    \[
        M^4(X) \sigma_4(X) + M(X) \big( M^2(X) - 10 M(X) + 18 \big) \leq 9
    \]
    with equality if and only if $X$ is, up to translation and reflection, a geometric random variable.
\end{theorem}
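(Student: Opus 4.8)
The plan is to follow the template of the proof of \Cref{thm: recovering piotrs}, pushing the same reduction through the fourth central moment. Since $M(X)$ and $\sigma_4(X)$ are both invariant under translation and reflection, I may assume $0$ is a mode of $X$ and, after possibly reflecting, that $\mathbb{E}X \le 0$. By \Cref{lem: asymmetric Laplace majorizer in discrete} there is an asymmetric Laplace $X_{p,q}$ (mode-centered at $0$) with $g_{p,q}(0) = g(0)$, hence $M(X_{p,q}) = M(X) =: M$, and $X \prec_{cx} X_{p,q}$. Since $\prec_{cx}$ forces equal means and $x \mapsto |x - \mathbb{E}X|^4$ is convex, this gives $\sigma_4(X) \le \sigma_4(X_{p,q})$; and because $|x|^4$ is strictly convex, the strictness clause of \Cref{lem:two_crossing} (applied exactly as in \Cref{lem: asymmetric Laplace majorizer in discrete}) shows that equality $\sigma_4(X) = \sigma_4(X_{p,q})$ forces $X \sim g_{p,q}$. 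So it suffices to prove the inequality for $X = X_{p,q}$, and by the reflection symmetry $\sigma_4(X_{p,q}) = \sigma_4(X_{q,p})$ I may take $q \le p$, which on the locus $M(X_{p,q}) = M$ means $p = \frac{1-q-M}{1-q-Mq}$ with $q \in \left[0, \frac{1-M}{1+M}\right]$, exactly as in the variance argument.

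Next I would compute $\sigma_4(X_{p,q})$ explicitly. The cleanest route is to observe that $X_{p,q}$ is distributed as $A - B$, with $A,B$ independent, $A$ geometric with $\mathbb{P}(A = k) = (1-q)q^k$ and $B$ geometric with $\mathbb{P}(B=k) = (1-p)p^k$ (a one-line convolution reproduces $g_{p,q}$); then, by independence and mean-zero centering,
\[
    \sigma_4(X_{p,q}) = \mu_4(q) + 6\,\mu_2(q)\,\mu_2(p) + \mu_4(p),
\]
where $\mu_2(s) = \frac{s}{(1-s)^2}$ and $\mu_4(s) = \frac{s(s^2 + 7s + 1)}{(1-s)^4}$ are the central second and fourth moments of a geometric law with ratio $s$ (alternatively one reads these off the moment generating function already recorded in the paper). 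Substituting $p = \frac{1-q-M}{1-q-Mq}$, so that $1 - p = \frac{M(1-q)}{1-q-Mq}$, turns $\sigma_4$ into an explicit rational function $h(q)$ of $q$ with parameter $M$. At $q = 0$ (so $p = 1-M$, the geometric case) one gets $h(0) = \frac{(1-M)(M^2 - 9M + 9)}{M^4}$, and a short expansion checks that $M^4 h(0) + M(M^2 - 10M + 18) = 9$. Thus the claimed inequality is equivalent to the assertion that $h$ is maximized at $q=0$ on $\left[0,\frac{1-M}{1+M}\right]$, with the maximum strict away from $q=0$; the degenerate case $M=1$, where $X$ is a point mass and both sides equal $9$, corresponds to the geometric with ratio $0$.

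The remaining and genuinely laborious step is to prove that $h$ is strictly decreasing on $\left[0,\frac{1-M}{1+M}\right]$. Differentiating and clearing the positive common denominator, $h'(q) \le 0$ becomes a polynomial inequality in $q$ and $M$; the aim is to show that, just as the variance derivative factored as $\frac{2}{M(1-q)^3}\big((1+M)q - (1-M)\big)$, the numerator of $h'(q)$ has $\big((1+M)q - (1-M)\big)$ as a factor with the complementary factor manifestly positive for $M \in (0,1)$ and $q \in \left[0,\frac{1-M}{1+M}\right)$. I expect this factorization to be the main obstacle: it is elementary but the expressions are bulky, and one must be careful at the boundary cases $q = 0$, $q = \frac{1-M}{1+M}$ (the symmetric Laplace), and $M \to 1$. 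Granting the factorization, $h$ attains its maximum uniquely at $q = 0$, so $\sigma_4(X) \le \frac{(1-M)(M^2-9M+9)}{M^4}$, which rearranges to the stated inequality; and tracing the two equality clauses backward shows equality holds precisely when $X$ is, up to translation and reflection, a geometric random variable.
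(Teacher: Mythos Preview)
Your proposal is correct and follows essentially the same route as the paper's proof: reduce to the one-parameter family of asymmetric Laplace distributions with fixed $M$ via \Cref{lem: asymmetric Laplace majorizer in discrete}, then show $\sigma_4$ is strictly decreasing in $q$ on $\bigl[0,\tfrac{1-M}{1+M}\bigr]$. The factorization you anticipate does hold---the paper's computation shows the numerator of $h'(q)$ is a cubic in $q$ whose unique real root is $q=\tfrac{1-M}{1+M}$ (the other two are complex), so checking $h'(0)<0$, which reduces to $M^3-13M^2+30M-18<0$ for $M\in(0,1)$, finishes the monotonicity; your $A-B$ decomposition is a tidy alternative to the paper's direct expansion of $\sigma_4$.
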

}
We note that the constant terms in Theorem \ref{thm: recovering piotrs} $1=  \ !2$ while the constant term in the above $9 = \ !4$, in agreement with the constants in the continuous case above in  Corollary \ref{cor: maximums to ACM}.  

\begin{proof}[{\color{black}Proof.}]
   For a fixed maximum $M$, by Lemma \ref{lem: asymmetric Laplace majorizer in discrete} it suffices to prove the result for asymmetric Laplace $X_{p,q}$, with $M(X_{p,q}) = M$, $q \in \left[ 0 , \frac{1-M}{1+M} \right]$ and $p = \frac{1 - q -M}{1-q-Mq}$.  It can be computed that,
    \[
    \sigma_4(X) = \sigma_4(q) = \frac{ 9 c_0(q) - 18 c_1(q) M + 10  c_2(q) M^2 - c_3(q )M^3   + 2 q c_4(q) M^4}{ M^4 ( 1 - q)^4},
    \]
    with
    \begin{align*}
        c_0(q) &= (1-q)^4
            \\
        c_1(q) &= (1-q)^3 (1+q)
            \\
        c_2(q) &= (1-q)^2 (1 + 4q + q^2)
            \\
        c_3(q) &= ( 1 + 22q - 22q^3 - q^4)
            \\
        c_4(q) &= ( 1 + 10q + q^2)   
    \end{align*}
    The derivative can be computed and simplified to 
    \[
        \sigma_4'(q) = 2 \frac{ -18 k_0 + 30 k_1 M -  13 k_2 M^2 + k_3 M^3  }{M^3 (1-q)^5}
    \]
    where
    \begin{align*}
        k_0(q) &= (1-q)^3
            \\
        k_1(q) &= (1-q)^2 (1+q)
            \\
        k_2(q) &= 1  + \frac{33}{13} q - \frac{33}{13} q^2 - q^3
            \\
        k_3(q) &= 1 + 23 q + 23q^2 + q^3
    \end{align*}
    Note that $\sigma_4(q) = 0$ if and only if $ -18 k_0 + 30 k_1 M -  13 k_2 M^2 + k_3 M^3 = 0$.  For fixed $M$, this is a cubic polynomial in $q$ with 2 complex roots, and one real root $ q = \frac{1-M}{1+M}$.  Thus to prove that $\sigma_4'(q) < 0 $ for $ q \in (0, \frac{1-M}{1+M})$ it suffices to prove that $\sigma_4'(0) < 0$, which is equivalent to $ - 18 + 30 M - 13 M^2 + M^3 < 0$ for $M \in (0,1)$, which is easily verified.  It follows that for $M(X) = M$, $$ \sigma_4(X) \leq \sigma_4(0) = \frac{9 - 18 M + 10M^2 - M^3}{M^4}.$$
    That the inequality is strict for $X$ that are {\it not} asymmetric Laplace follows by Lemma \ref{lem:two_crossing} and the strict convexity of $f(x) = x^4$, as it provides an asymmetric Laplace $X_{p,q}$ such that $\sigma_4(X) < \sigma_4(X_{p,q})$. That the inequality is actually strict among asymmetric Laplace distributions that are {\it not} geometric follows from the strict negativity of $\sigma_4'(q)$, analogously to Theorem \ref{thm: recovering piotrs}.
\end{proof}

\section{Majorization and derivatives} \label{sec: Majorization}
The results here and generalizations thereof are classical, see \cite{karlin1963generalized}, but we include simple proofs for completeness.  For $n \in \mathbb{N}$ we will employ the notation $[n] = \{0,1, \dots, n\}$.

\begin{proof}[Proof of \Cref{lem:two_crossing}]
    There exists $A$ an affine function such that $\tilde{f} \coloneqq f - A$ is nonnegative outside of $[x_0, x_1]$ and nonpositive inside of $(x_0,x_1)$.  Then
    \[
        \int \tilde{f} \phi d\mu = \int f \phi d\mu - \int A \phi d\mu
    \]
    but $A(x) = mx +b$ for constants $ m$ and $b$, so 
    \[
        \int A \phi d\mu = m \int \psi d \mu + b \int \phi d \mu = 0.
    \]
    Thus we have
    \[
        \int f \phi d\mu = \int \tilde{f} \phi d\mu
    \]
    and by construction $\tilde{f} \phi \geq 0$.  In the case that there exists equality for $f$ strictly convex, 
    then the construction above of $\tilde{f}$ is strictly convex, and hence is non-zero away from $x_0$ and $x_1$.
    Since $\int |\tilde{f} \phi|  d\mu = \int \tilde{f} \phi d\mu = 0$, we have that $\tilde{f} \phi = 0$ $\mu$-almost surely.
    But since $\tilde{f}$ is non-zero away from the $x_i$, we have $\phi = 0$ $\mu$-almost surely away from the $x_i$.  
    However, by the assumption $0 = \int \phi(x) d\mu = \int x \phi(x) d\mu(x)$, we have $\Phi u = 0$ where 
    \begin{align*}
         \Phi \coloneqq \begin{pmatrix}
            \phi(x_0) & \phi(x_1) \\
            x_0 \phi(x_0) & x_1 \phi(x_1)
            \end{pmatrix} \hbox{ and } u \coloneqq \begin{pmatrix}
                \mu \{x_0\} \\
                \mu \{x_1\}
                \end{pmatrix}.
    \end{align*}
    If $u = 0$, then $\phi = 0$ $\mu$-almost surely and we are done.  
    If $u \neq 0$, then the rows of $\Phi$ are linearly dependent.  
    Since $x_0 \neq x_1$, this is only possible if at least one of $\phi(x_0)$ and $\phi(x_1)$ is zero.  
    For concreteness, say $\phi(x_0) = 0$:
    then, considering the first entry of $\Phi u$ we have $\phi(x_1) \mu\{x_1\} = 0$.  
    Thus either $\phi(x_1)$ or $\mu\{x_1\}  =0$; 
    in either case we are done.
\end{proof}

We note that the proof of \Cref{lem:one_crossing} is similar and simpler; taking $\tilde{f}(x) = f(x) - f(x')$ one can proceed analogously to the above.

\bigskip

    Now let us consider a generalization of these ideas to the case of $\phi$ with $n$ crossings, orthogonal (with respect to $\mu$) to all polynomials of degree $k < n$.  
    To this end, we denote the $k$-th derivative of a function $g$ by $g^{(k)}$, with the convention that $g^{(0)} = g$.  
    We will consider the class of functions whose $n$-th derivative strictly convex.
    That is, for $n \geq 0$,
    \[
        \mathcal{A}_n \coloneqq \{g: \mathbb{R} \to \mathbb{R} \mid g^{(n)} \mbox{ is strictly convex } \} .
    \]
    By convention we consider $\mathcal{A}_{-1}$ to be the  the class of strictly increasing functions, with the heuristic that an anti-derivative of a strictly increasing function will be strictly convex.  Similarly, we adopt the convention that $g^{(-1)}$ is convex when $g$ is increasing.  We will need the following elementary lemma.

\begin{lem} \label{lem: sign switching functions}
    For $n \geq 1$ and $ g \in \mathcal{A}_{n-2}$, $g$ has at most $n$ zeros.  
    Moreover, if $g$ has exactly $n$ zeros $x_1 < x_2 < \cdots < x_n$, then
    \begin{align} \label{eq: poly g inequality}
        g(x) \prod_{k=1}^n (x - x_k) \geq 0.
    \end{align} 
\end{lem}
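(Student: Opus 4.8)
The plan is to prove Lemma~\ref{lem: sign switching functions} by induction on $n$, using the convention that $\mathcal{A}_{-1}$ consists of strictly increasing functions and that passing from $\mathcal{A}_{n-2}$ to $\mathcal{A}_{n-1}$ corresponds to anti-differentiation.

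\medskip

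\noindent\textbf{Base case.} For $n=1$, $g \in \mathcal{A}_{-1}$ is strictly increasing, hence injective, so it has at most one zero. If it has exactly one zero $x_1$, then $g(x) < 0 = g(x_1)$ for $x < x_1$ and $g(x) > 0$ for $x > x_1$, so $g(x)(x-x_1) \geq 0$ everywhere. (One may also include $n=2$ as a base case directly: if $g \in \mathcal{A}_0$ is strictly convex, then $\{g \leq 0\}$ is an interval, possibly degenerate or empty, so $g$ has at most two zeros; and if it has exactly two zeros $x_1 < x_2$, strict convexity forces $g < 0$ on $(x_1,x_2)$ and $g > 0$ outside $[x_1,x_2]$, which is exactly $g(x)(x-x_1)(x-x_2) \geq 0$.)

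\medskip

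\noindent\textbf{Inductive step.} Suppose the claim holds for $n-1$, and let $g \in \mathcal{A}_{n-2}$, so that $g' \in \mathcal{A}_{n-3}$ (with the convention that $g'$ is strictly increasing when $n=2$, already covered). Assume $g$ has at least $n+1$ zeros $y_0 < y_1 < \cdots < y_n$. By Rolle's theorem, $g'$ has a zero in each of the $n$ open intervals $(y_{k-1},y_k)$, hence at least $n$ zeros; but $g' \in \mathcal{A}_{n-3} = \mathcal{A}_{(n-1)-2}$, so by the inductive hypothesis $g'$ has at most $n-1$ zeros, a contradiction. Therefore $g$ has at most $n$ zeros.

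\medskip

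\noindent\textbf{The sign pattern.} Now suppose $g$ has exactly $n$ zeros $x_1 < \cdots < x_n$. Between consecutive zeros and on the two unbounded rays, $g$ has constant sign (it cannot vanish off the $x_k$, and a sign change would create an extra zero by the intermediate value theorem; strict convexity of $g^{(n-2)}$ guarantees $g$ is continuous). So it remains to show the signs alternate, being positive on $(x_n,\infty)$. For the alternation: if $g$ had the same sign on two adjacent intervals $(x_{k-1},x_k)$ and $(x_k,x_{k+1})$, then $x_k$ would be a local extremum with $g(x_k)=0$, so $g'$ would vanish at $x_k$ in addition to vanishing once strictly inside each of $(x_1,x_2),\dots,(x_{n-1},x_n)$ by Rolle --- that is $n$ zeros of $g'$, again contradicting the inductive bound of $n-1$. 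Hence the signs strictly alternate. Finally, to pin down the sign on $(x_n,\infty)$ as positive: write $g = (g^{(n-2)})$ integrated $n-2$ times; since $g^{(n-2)}$ is strictly convex it is eventually positive and increasing, so each successive anti-derivative is eventually positive, giving $g(x) > 0$ for $x$ large, i.e. on $(x_n, \infty)$. Combined with alternation, $g$ has the sign of $(-1)^{n-k}$ on $(x_k, x_{k+1})$, which is precisely the sign of $\prod_{j=1}^n (x - x_j)$ there, and both sides vanish at the $x_k$; thus $g(x)\prod_{k=1}^n(x-x_k) \geq 0$ for all $x$.

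\medskip

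\noindent I expect the main subtlety to be bookkeeping around the convention $\mathcal{A}_{-1}$ and the "integrate $n-2$ times" heuristic: one must be slightly careful that strict convexity of $g^{(n-2)}$ really does force $g$ to be eventually positive (it does, since a strictly convex function on $\mathbb{R}$ tends to $+\infty$ in at least one direction, and if only one direction one still gets $g^{(n-2)} \to +\infty$ there; iterating anti-differentiation $n-2$ times preserves "eventually positive and increasing toward $+\infty$" in that same direction). The Rolle-theorem counting argument is the clean engine and carries the rest with no real computation.
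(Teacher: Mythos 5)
Your induction and Rolle-counting argument is sound and closely parallels the paper's; the sign-alternation step (no two adjacent intervals can share a sign, else $x_k$ would be a critical zero and $g'$ would have $n$ zeros) is a slight variant of the paper's interlacing argument and is also fine. The gap is in pinning down the sign on $(x_n,\infty)$.

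The claim that ``iterating anti-differentiation $n-2$ times preserves `eventually positive and increasing toward $+\infty$' in that same direction'' fails when the direction is $x \to -\infty$. If $g^{(n-2)}(x) \to +\infty$ as $x \to -\infty$, then $g^{(n-3)}$ has derivative blowing up to $+\infty$ in that direction, which means $g^{(n-3)}(x) \to -\infty$ as $x \to -\infty$: anti-differentiation flips the behaviour on the left ray rather than preserving it. And strictly convex functions that blow up only to the left really exist (e.g.\ $g^{(n-2)}(x) = -x + e^{-x}$), so the argument as stated is not correct in general. You can rescue the conclusion that $g^{(n-2)}$ must in fact blow up as $x \to +\infty$ by noting that otherwise it has at most one zero, and then a repeated Rolle count forces $g$ to have at most $n-1$ zeros, contradicting the hypothesis --- but you never make this inference. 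The cleaner route, and what the paper actually uses, is the part of the inductive hypothesis you left on the table: $g'$ has exactly $n-1$ zeros $z_1 < \cdots < z_{n-1}$ interlacing the $x_i$ (Rolle gives $n-1$, the bound gives no more), so by induction $g'(x)\prod_{k=1}^{n-1}(x-z_k) \ge 0$, whence $g'(x) > 0$ for $x > z_{n-1}$, and in particular on $[x_n,\infty)$. Since $g(x_n)=0$ and $g$ is strictly increasing to the right of $x_n$, $g>0$ on $(x_n,\infty)$, which is what the mean-value-theorem line in the paper's proof encodes. Replace your final paragraph with this and the proof closes.
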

Note that the inequality \eqref{eq: poly g inequality} is necessarily strict for $x$ such that $g(x) \neq 0$.  

\begin{proof}
    The $n=1$ case follows trivially. 
 When $n=2$, $g$ strictly convex with zeros $x_1$ and $x_2$ implies
    \begin{equation} \label{eq: smaller than 0}
        g((1-t) x_1 + t x_2) < 0
    \end{equation}
    for $t \in (0,1)$ while
      \begin{equation} \label{eq: bigger than zero}
        g((1-t) x_1 + t x_2) > 0
    \end{equation}
    for $t \in (-\infty, 0) \cup (1, \infty)$, hence $g$ can have at most two zeros.  
    Moreover, the observations \eqref{eq: smaller than 0} and \eqref{eq: bigger than zero} imply that $g(x)(x - x_1)(x- x_2) \geq 0 $.

     Proceeding by induction for $n \geq 3$, suppose $g^{(n-2)} = (g')^{(n-3)} $ is convex, so that  $g'$ has no more than $n-1$ zeros.  By the mean value theorem $g$ has no more than $n$ zeros.  
     In the case that $g$ has $n$ distinct zeros $x_1 < \cdots < x_n$, Rolle's theorem gives the existence of $z_1, \dots, z_{n-1}$ zeros of $g'$ interlacing the zeros of $g$ in the sense that $$x_i < z_i < x_{i+1}.$$ 
     By the already proven first half of this theorem, $g'$ has no more than $n-1$ zeros, and 
     by induction
     \[
        g'(x) \prod_{k=1}^{n-1} (x- z_k) \geq 0.
     \]  
     In particular, $g'(x) > 0$ for $x > z_k$.  By the interlacing of zeros $g'$ can be bounded away from zero at the zeros of $g$ and hence $g$ necessarily changes sign at its zeros. 
     It follows that
     \[
        g(x) \prod_{k=1}^n (x- x_k)
     \]
     is either non-negative or non-positive.  To check its sign, take $x > x_n > z_{n-1}$ and observe that by the mean value theorem there exists $ x^* \in [x_n , x] $ such that $g(x) = g'(x^*)(x - x_n) + g(x_n)$.  The latter is positive and the conclusion follows.
\end{proof}

\begin{lem} \label{lem: modulo polynomial}
    Given $f \in \mathcal{A}_n$ and real numbers $x_1 < x_2 < \cdots < x_n$, there exists a polynomial $P(x) = \sum_{j=0}^d p_j x^j$ of degree $d < n$ such that 
    \[
        ( f - P)(x) \prod_{k=1}^n (x - x_k) \geq 0.
    \]
\end{lem}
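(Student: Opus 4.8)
The plan is to construct $P$ as the degree-$(n-1)$ Hermite-type interpolant that forces $f - P$ to vanish to the appropriate orders at the prescribed points, and then to invoke Lemma~\ref{lem: sign switching functions} applied to $f - P$ in the class $\mathcal{A}_{n-2}$. Concretely, I would first observe that the map sending a polynomial $P$ of degree $< n$ to the vector of values $(P(x_1), \dots, P(x_n))$ is a bijection onto $\mathbb{R}^n$ (Lagrange interpolation), so there is a unique polynomial $P$ of degree $\le n-1$ with $P(x_k) = f(x_k)$ for $k = 1, \dots, n$. Then $g \coloneqq f - P$ has at least the $n$ zeros $x_1 < \cdots < x_n$.

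Next I would check that $g \in \mathcal{A}_{n-2}$: since $f \in \mathcal{A}_n$ means $f^{(n)}$ is strictly convex, and $P$ has degree $< n$ so $P^{(n)} \equiv 0$, we get $g^{(n)} = f^{(n)}$ strictly convex. I need the hypothesis of Lemma~\ref{lem: sign switching functions}, which asks for $g \in \mathcal{A}_{(n+2)-2} = \mathcal{A}_n$ to bound the number of zeros by $n+2$ — wait, one must line up indices carefully here. Lemma~\ref{lem: sign switching functions} says: for $g \in \mathcal{A}_{m-2}$, $g$ has at most $m$ zeros, and if exactly $m$, then $g(x)\prod_{k=1}^m (x - x_k) \ge 0$. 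So I want to apply it with $m = n$, which requires $g \in \mathcal{A}_{n-2}$; and indeed $g^{(n-2)} = f^{(n-2)}$, which is strictly convex because $f \in \mathcal{A}_n$ means $f^{(n)}$ strictly convex, hence $f^{(n-2)}$ has strictly convex second derivative, hence is itself strictly convex. (The conventions $\mathcal{A}_{-1}$, $\mathcal{A}_0$ handle the small cases $n = 1, 2$.) Therefore $g$ has at most $n$ zeros; combined with the $n$ zeros it already possesses at the $x_k$, it has exactly $n$ zeros, and Lemma~\ref{lem: sign switching functions} gives exactly the desired inequality $(f-P)(x)\prod_{k=1}^n (x-x_k) \ge 0$.

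The main subtlety — really the only thing requiring care — is the bookkeeping of the derivative conventions at the boundary (the meaning of $\mathcal{A}_{-1}$ and $\mathcal{A}_0$, and the claim that "$f^{(n-2)}$ strictly convex" follows from "$f^{(n)}$ strictly convex"). For $n \ge 2$ this is immediate since strict convexity of $h''$ implies strict convexity of $h$ (the second derivative of $f^{(n-2)}$ is $f^{(n)}$, which is strictly convex, hence in particular $f^{(n-2)}$ is convex — but I actually only need $f^{(n-2)} \in$ "second derivative strictly convex", which is exactly the class $\mathcal{A}_{n-2}$ demands at its own level of indexing, so no real loss occurs). For $n = 1$ one checks directly: $f \in \mathcal{A}_1$ means $f'$ strictly convex; then $f$ has at most... here I should instead just note $g = f - P$ with $P$ constant has the property that $g' = f'$ is strictly convex so $g'$ has at most two zeros, hence $g$ has at most... this needs the $n=1$ branch of Lemma~\ref{lem: sign switching functions} which is stated for $g \in \mathcal{A}_{-1}$, i.e. $g$ strictly increasing — but that is false here. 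So for small $n$ I would instead apply Lemma~\ref{lem: sign switching functions} directly with the correct index: $f - P \in \mathcal{A}_{n-2}$ and $m = n$, and trust that the lemma's own proof (which handles $n \ge 1$) covers it. I expect the cleanest writeup simply states: take $P$ the Lagrange interpolant of $f$ at $x_1, \dots, x_n$; then $f - P$ has these $n$ zeros and lies in $\mathcal{A}_{n-2}$, so by Lemma~\ref{lem: sign switching functions} it has no other zeros and $(f-P)(x)\prod_k(x-x_k) \ge 0$. That is the whole argument.

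\begin{proof}
    By Lagrange interpolation there is a unique polynomial $P$ of degree $d < n$ with $P(x_k) = f(x_k)$ for each $k \in \{1, \dots, n\}$. Set $g \coloneqq f - P$, so that $g$ vanishes at $x_1 < x_2 < \cdots < x_n$. Since $P$ has degree less than $n$, we have $g^{(n-2)} = f^{(n-2)}$ (interpreting the derivative conventions as in the text when $n \le 2$), and strict convexity of $f^{(n)}$, i.e. $f \in \mathcal{A}_n$, implies $g \in \mathcal{A}_{n-2}$. By \Cref{lem: sign switching functions}, $g$ has at most $n$ zeros; having already exhibited $n$ of them, $x_1, \dots, x_n$ are all of them, and the same lemma yields
    \[
        (f - P)(x) \prod_{k=1}^n (x - x_k) = g(x) \prod_{k=1}^n (x - x_k) \geq 0. \qedhere
    \]
\end{proof}
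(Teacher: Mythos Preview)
Your approach is exactly the paper's: take $P$ to be the Lagrange interpolant of $f$ at $x_1,\dots,x_n$, observe $g=f-P$ has those $n$ zeros and lies in $\mathcal{A}_{n-2}$, and feed this to \Cref{lem: sign switching functions}. Two slips, however.

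First, the claim $g^{(n-2)}=f^{(n-2)}$ is not correct: $P$ has degree $n-1$, so $P^{(n-2)}$ is affine, not zero. What is true (and what the paper uses) is that $g^{(n-2)}=f^{(n-2)}-(\text{affine})$, so $g^{(n-2)}$ is strictly convex iff $f^{(n-2)}$ is. This is a harmless oversight once stated correctly.

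Second, and more substantively, your attempted bridge ``strict convexity of $f^{(n)}$ implies strict convexity of $f^{(n-2)}$'' is false: take $h(x)=x^4-10x^2$, whose second derivative $12x^2-20$ is strictly convex while $h$ is concave near $0$. So the deduction $f\in\mathcal{A}_n\Rightarrow g\in\mathcal{A}_{n-2}$ does not go through as written. In fact, the paper's own proof (and its only use, in \Cref{thm: Karlin-Novikoff criteria}, where one applies the lemma to $f_\varepsilon$ with $f_\varepsilon^{(n-2)}$ strictly convex) really relies on the hypothesis $f^{(n-2)}$ strictly convex, i.e.\ $f\in\mathcal{A}_{n-2}$; the ``$\mathcal{A}_n$'' in the statement appears to be an indexing slip. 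Your confusion here is therefore understandable, but the justification you supply is still incorrect. With the hypothesis read as $f\in\mathcal{A}_{n-2}$, your argument (after fixing the first slip) is identical to the paper's.
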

\begin{proof}

    Let 
    \[P(x)\coloneqq\sum_{k=1}^n f(x_k) \prod_{j\neq k}\frac{x-x_j}{(x_k-x_j)}.\]
    and observe $P(x_k)=f(x_k)$ for each $k$, and that $P^{(n-2)}$ is affine since $\deg(P)=n-1$. Thus $(f-P)^{(n-2)}$ is strictly convex and has exactly $n$ distinct zeros occurring at each of the $x_k$.

    The result follows by letting $g=f-P$ in Lemma \ref{lem: sign switching functions}.
\end{proof}

\begin{thm} \label{thm: Karlin-Novikoff criteria}
    Let $\mu$ be a Borel measure on $\mathbb{R}$. Suppose that $\phi: \mathbb{R} \to \mathbb{R}$ is such that there exists $x_1 < x_2 < \cdots < x_n$ such that $\phi(x) \prod_{k=1}^n (x - x_k) \geq 0$. Suppose that $x^k \phi(x)$ belongs to $L_1(\mu)$ for $k \leq n$ with
    \[
        \int x^k \phi(x) d\mu(x) = 0
    \]
    for $ k \leq n-1$ and that $f^{ (n-2)}$ is convex. Then
    \[
        \int f \phi d\mu \geq 0.
    \]
\end{thm}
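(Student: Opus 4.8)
The plan is to reduce $\int f\phi\,d\mu \ge 0$ to the already-proven Lemmas \ref{lem:one_crossing} and \ref{lem:two_crossing} by subtracting off a suitable polynomial. First I would invoke Lemma \ref{lem: modulo polynomial}: since $f^{(n-2)}$ is convex, $f \in \mathcal{A}_{n-2}$ (modulo the strictness issue—see below), so there is a polynomial $P$ of degree $d < n$ with $(f-P)(x)\prod_{k=1}^n(x-x_k) \ge 0$ for all $x$. Combined with the hypothesis $\phi(x)\prod_{k=1}^n(x-x_k)\ge 0$, this forces $(f-P)(x)\phi(x) \ge 0$ pointwise: wherever the product $\prod(x-x_k)$ is nonzero, $f-P$ and $\phi$ have the same sign (or one vanishes), so their product is nonnegative; at the finitely many points $x_k$ one checks directly (or notes these contribute measure-zero issues handled by the $L^1$ assumptions). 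Hence $\int (f-P)\phi\,d\mu \ge 0$.

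Next I would show the polynomial correction is annihilated: writing $P(x) = \sum_{j=0}^{d} p_j x^j$ with $d \le n-1$, linearity gives $\int P\phi\,d\mu = \sum_{j=0}^d p_j \int x^j \phi(x)\,d\mu(x)$, and every term vanishes by the orthogonality hypothesis $\int x^k\phi\,d\mu = 0$ for $k \le n-1$. The $L^1$ hypothesis that $x^k\phi \in L^1(\mu)$ for $k \le n$ is exactly what is needed to (a) make these integrals well-defined and (b) split $\int f\phi = \int(f-P)\phi + \int P\phi$ legitimately—one should note $f-P$ is sandwiched in growth between $f$ and a degree-$(n-1)$ polynomial, keeping $(f-P)\phi$ integrable when combined with the pointwise sign control. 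Therefore $\int f\phi\,d\mu = \int(f-P)\phi\,d\mu \ge 0$, which is the claim.

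The main obstacle is bookkeeping around strictness and the boundary cases $n=1,2$. Lemma \ref{lem: modulo polynomial} as stated requires $f \in \mathcal{A}_n$ (i.e. $f^{(n)}$ \emph{strictly} convex), whereas the theorem only assumes $f^{(n-2)}$ convex; one needs to observe that the conclusion is stable under the approximation $f \rightsquigarrow f + \e \cdot(\text{strictly convex of order }n-2)$ and let $\e \to 0$, or more cleanly apply Lemma \ref{lem: sign switching functions} directly to $g = f - P$ where $P$ is the degree-$(n-1)$ Lagrange interpolant through $(x_k, f(x_k))$, which only needs convexity (not strict convexity) once one allows $g$ to possibly have extra zeros—here one must argue that $(f-P)\prod(x-x_k) \ge 0$ still holds, since between consecutive interpolation nodes $g$ cannot change sign more than the convexity of $g^{(n-2)}$ permits. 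For $n=1$ (where "$f^{(-1)}$ convex" means $f$ increasing) the statement is literally Lemma \ref{lem:one_crossing} and for $n=2$ it is Lemma \ref{lem:two_crossing}, so those serve as the base of the induction and as sanity checks; everything else is the routine polynomial-annihilation computation.
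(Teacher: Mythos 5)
Your proposal is correct and takes essentially the same route as the paper: invoke Lemma~\ref{lem: modulo polynomial} to subtract the Lagrange interpolant $P$ (degree $< n$) so that $(f-P)\phi \geq 0$ pointwise, kill $\int P\phi\,d\mu$ via the moment orthogonality, and handle the gap between ``$f^{(n-2)}$ convex'' and the strict convexity required by Lemma~\ref{lem: modulo polynomial} by perturbing to $f_\varepsilon = f + \varepsilon x^n$ and letting $\varepsilon \to 0$, exactly as the paper does. Your alternative suggestion of applying Lemma~\ref{lem: sign switching functions} directly to $g = f - P$ without perturbation would require re-proving that lemma under non-strict convexity (its proof as written uses strictness to control the zero set), so the $\varepsilon$-regularization is the cleaner and in fact the published choice.
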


\begin{proof}
    Applying \Cref{lem: modulo polynomial} to $f_\varepsilon(x) \coloneqq f(x) + \varepsilon x^n$ and the $x_i$ we obtain a polynomial $P$ of degree $n-1$ such that $(f_\varepsilon-P) \phi \geq 0$.  Indeed, $(f_\varepsilon - P)(x_k) = 0$ by construction, while
    \[
        \left(\prod_{k=1}^{n}(x - x_k) \right)^2 (f_\varepsilon - P)\phi(x) \geq 0
    \]
    for $x \notin \{x_k\}_{k=1}^n$.  Since $P$ is a degree $n-1$ polynomial, $\int P \phi d\mu = 0$. Thus
\begin{align*}
    \int f_\varepsilon \phi d\mu = \int (f_\varepsilon -P) \phi d\mu  + \int P \phi d\mu \geq 0.
\end{align*}
Taking $\varepsilon \to 0$ completes the proof.
\end{proof}
We do not pursue the minimal hypothesis in the above. Weaker moment conditions can be approached by more delicate approximations of $f$. In the context of log-concave random variables the existence of moments of all orders is implied. We conclude with a reformulation of the above for random variables.
\begin{cor} \label{cor: n crossings for random variables}
    Let $X_1$ and $X_2$ be random variables with $\mathbb{E}|X_i|^n < \infty$ 
    and densities $\phi_1$ and $\phi_2$ with respect to a common reference measure $\mu$.  
    Let $ x_1 < x_2 < \cdots < x_n$ be such that $(\phi_2 - \phi_1)(x) \prod_{k=1}^n (x_k - x) \geq 0$,
    and suppose that
    \(
        \mathbb{E}[X^k_1] = \mathbb{E}[X_2^k] 
    \)
    for all $k \leq n-1$. 
    Then~$X_1 \prec_n X_2$.
\end{cor}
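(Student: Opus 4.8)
The plan is to recognize the statement as nothing more than the random-variable reformulation of \Cref{thm: Karlin-Novikoff criteria}, obtained by taking $\phi \coloneqq \phi_2 - \phi_1$ and letting the test function run over the class defining $\prec_n$. First I would record that the hypotheses of that theorem hold. Since $\phi_1$ and $\phi_2$ are probability densities with respect to $\mu$, one has $\int \phi\, d\mu = 0$, and the assumption $\mathbb{E}[X_1^k] = \mathbb{E}[X_2^k]$ for $1 \le k \le n-1$ upgrades this to $\int x^k \phi(x)\, d\mu(x) = 0$ for every $k \le n-1$; the integrability $x^k\phi \in L^1(\mu)$ for $k \le n$ is immediate from $\mathbb{E}|X_i|^n < \infty$. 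The crossing hypothesis $(\phi_2-\phi_1)(x)\prod_{k=1}^n(x_k - x) \ge 0$ becomes, via the identity $\prod_{k=1}^n(x_k-x) = (-1)^n\prod_{k=1}^n(x-x_k)$, exactly the sign condition $\phi(x)\prod_{k=1}^n(x-x_k)\ge 0$ needed there, with the orientation fixed as in the cases $n=1,2$ of the remarks following \Cref{lem:one_crossing} and \Cref{lem:two_crossing}, so that $\phi_2$ plays the role of the ``larger'' density.

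Next I would fix an arbitrary $f$ in the class defining $\prec_n$, i.e.\ $f$ with nonnegative distributional $n$-th derivative; by the classical description recalled in \Cref{sec: defn} this means $f$ non-decreasing when $n=1$ and $f^{(n-2)}$ convex when $n\ge 2$, which is precisely the regularity hypothesis of \Cref{thm: Karlin-Novikoff criteria}. That theorem then gives
\[
    0 \;\le\; \int f\,\phi\, d\mu \;=\; \int f\,\phi_2\, d\mu - \int f\,\phi_1\, d\mu \;=\; \mathbb{E}[f(X_2)] - \mathbb{E}[f(X_1)],
\]
and since $f$ ranges over the whole class, this says exactly $X_1 \prec_n X_2$.

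I do not expect a genuine obstacle, as the corollary is essentially a translation of notation. The two points that need a little care are: keeping track of the orientation of the crossing product ($\prod_{k=1}^n(x_k-x)$ here versus $\prod_{k=1}^n(x-x_k)$ in \Cref{thm: Karlin-Novikoff criteria}), so that one lands on $X_1 \prec_n X_2$ and not the reverse ordering; and the integrability of $\int f\,\phi\, d\mu$, which under the bare hypothesis $\mathbb{E}|X_i|^n < \infty$ need not be absolutely convergent for every admissible $f$ (e.g.\ $f$ growing exponentially). For the latter one restricts to $f$ of polynomial growth, or invokes the approximation remark following \Cref{thm: Karlin-Novikoff criteria}; in the log-concave applications that motivate the corollary all moments exist, so the point is moot.
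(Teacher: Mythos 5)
Your approach—apply \Cref{thm: Karlin-Novikoff criteria} with $\phi \coloneqq \phi_2 - \phi_1$—is exactly the paper's proof, which is a one-liner saying just that. So structurally you are on the paper's route, and your translation of the moment hypotheses into $\int x^k \phi\,d\mu = 0$ is fine.

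However, the orientation point you flag as ``needing a little care'' is in fact a genuine gap as you've handled it, and the identity you invoke does not resolve it. From $\prod_{k=1}^n(x_k-x) = (-1)^n\prod_{k=1}^n(x-x_k)$, the hypothesis $(\phi_2-\phi_1)(x)\prod_{k=1}^n(x_k-x)\ge 0$ becomes $(\phi_2-\phi_1)(x)\prod_{k=1}^n(x-x_k)\ge 0$ \emph{only when $n$ is even}; for odd $n$ the sign flips and you get $(\phi_1-\phi_2)(x)\prod_{k=1}^n(x-x_k)\ge 0$ instead. Applying \Cref{thm: Karlin-Novikoff criteria} to $\phi_1-\phi_2$ then yields $\mathbb{E}[f(X_1)]\ge\mathbb{E}[f(X_2)]$, i.e.\ $X_2\prec_n X_1$, not $X_1\prec_n X_2$. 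Your appeal to the $n=1,2$ remarks after \Cref{lem:one_crossing} and \Cref{lem:two_crossing} does not rescue this: the remark after \Cref{lem:one_crossing} takes $\phi=g_2-g_1$ with $\phi\le 0$ on the left and $\phi\ge 0$ on the right (i.e.\ $\phi(x)(x-x')\ge 0$), which is precisely the \emph{opposite} of the corollary's $n=1$ sign convention $(\phi_2-\phi_1)(x)(x_1-x)\ge 0$. What you have actually uncovered is a likely typo in the corollary's statement: for the stated conclusion $X_1\prec_n X_2$ to hold for all $n$, the product should read $\prod_{k=1}^n(x-x_k)$, matching \Cref{thm: Karlin-Novikoff criteria} directly, and then the one-line proof goes through with no sign bookkeeping at all. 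As written, your argument silently assumes $n$ is even; you should either restrict to even $n$, replace the product, or state the conclusion with the orientation reversed when $n$ is odd. Your remark about the integrability of $\int f\phi\,d\mu$ under only $\mathbb{E}|X_i|^n<\infty$ is a fair caveat, and the paper itself explicitly declines to pursue minimal hypotheses here.
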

\begin{proof}
    Apply Theorem \ref{thm: Karlin-Novikoff criteria} to $\phi \coloneqq \phi_2 - \phi_1$.
\end{proof}

\bibliographystyle{plainurl}
\bibliography{VMC}

\end{document}